\newtheorem{theorem}{Theorem}[section]
\newtheorem{definitionn}[theorem]{Definition} 
\newtheorem{lemma}[theorem]{Lemma}
\newtheorem{proposition}[theorem]{Proposition}
\newtheorem{conjecture}[theorem]{Conjecture} 
{ \theoremstyle{remark}\newtheorem*{remark}{Remark} }
\numberwithin{theorem}{section} 
\numberwithin{equation}{section}
\newcommand{\myop}[1]{\operatorname{#1}} 
\newcommand{\Tr}{\myop{Tr}} 
\newcommand{\id}{\myop{id}}
\newcommand{\Mor}{\myop{Mor}}
\newcommand{\N}{\mathbb N}
\newcommand{\n}{\mathbb{N}}
\newcommand{\z}{\mathbb{Z}}
\newcommand{\C}{\mathbb C}
\newcommand{\Comp}{\mathbb C} 
\newcommand{\R}{\mathbb R} 
\newcommand{\G}{\mathbb G}
\newcommand{\g}{\mathbb G}
\newcommand{\tor}{\mathbb{T}}
\newcommand{\HS}{\mathrm{HS}}
\newcommand{\norm}[1]{\|#1\|}
\newcommand{\tp}{\mathop{\xymatrix{*+<.7ex>[o][F-]{\scriptstyle \top}} } }
\begin{document}
    
\title{Property RD and hypercontractivity for orthogonal free quantum groups}
    
\author{Michael Brannan} \address{Michael Brannan: DEPARTMENT OF MATHEMATICS, MAILSTOP 3368, TEXAS A$\&$M UNIVERSITY, COLLEGE STATION, TX 77843-3368, USA}
\email{mbrannan@math.tamu.edu}
 
\author{Roland Vergnioux} \address{Roland Vergnioux: Normandie Univ, UNICAEN,
  CNRS, LMNO, 14000 Caen, France} \email{roland.vergnioux@unicaen.fr}

\author{Sang-Gyun Youn} \address{Sang-Gyun Youn: Department of Mathematics Education, Seoul National University, Gwanak-ro 1, Gwanak-gu, Seoul, 08826, Republic of Korea} \email{s.youn@snu.ac.kr}
    
\keywords{Orthogonal free quantum groups, rapid decay property, hypercontractivity} 
\subjclass[2010]{Primary 47A30, 43A15; Secondary 20G42, 47D03}

\begin{abstract}
  We prove that the twisted property RD introduced in \cite{BhVoZa15} fails to
  hold for all non Kac type, non amenable orthogonal free quantum groups. In the
  Kac case we revisit property RD, proving an analogue of the $L_p-L_2$
  non-commutative Khintchine inequality for free groups from \cite{RiXu16}.  As
  an application, we give new and improved hypercontractivity and
  ultracontractivity estimates for the generalized heat semigroups on free
  orthogonal quantum groups, both in the Kac and non Kac cases.
\end{abstract}
    
\maketitle

\section{Introduction}

Property RD (Haagerup's inequality) is a fundamental tool in the study of the
reduced $C^*$-algebra of discrete groups, allowing one to control the operator norm
of convolution operators by means of the much simpler $\ell^2$-norm (see
Section~\ref{subsec_intro_RD} for more details). It appeared in the seminal
paper \cite{Ha78} where it was used in conjunction with Haagerup's approximation
property (HAP) to establish the Metric Approximation Property (MAP) for reduced
$C^*$-algebras of free groups.

The definition of Property RD was extended to discrete quantum groups in
\cite{Ve07} and was proved there to be satisfied by Kac type (unimodular)
orthogonal and unitary free quantum groups. In \cite{Br12} a quantum analogue of
the HAP was established for these free quantum groups, thus yielding a proof of
the MAP for the corresponding reduced $C^*$-algebras. Property RD was moreover
used for the study of other aspects of discrete quantum group operator algebras,
see e.g.  \cite{VaVe07,Ve12,Br14,Yo18a}. Interesting connections to Quantum
Information Theory, specific to the quantum framework, were also unveiled in
\cite{BrCo18}.

The definition of Property RD used in \cite{Ve07} can only be satisfied by Kac
type discrete quantum groups. In \cite{BhVoZa15}, the authors give a "twisted"
version of the definition which holds for all (duals of) $q$-deformations of
connected compact semi-simple Lie groups, and give applications to
noncommutative geometry.

\bigskip

Hypercontractivity describes the regularization effect, in terms of $L_p$-norms,
of a given Markov semigroup.  It has been studied extensively since the early
70's, starting with the work of Nelson and Gross \cite{Ne73,Gr72}, and has found
surprising applications in harmonic analysis, information theory and statistical
mechanics. In the case of the Ornstein-Uhlenbeck semigroup on the Clifford
algebra with one generator, the two-point inequality of Bonami, rediscovered by
Gross \cite{Bo70,Gr75}, already has deep applications to (quantum) information
theory \cite{BuReScWo12,GaKeKeRaWo08,KlRe11,KhVi15}.

In the noncommutative framework, hypercontractivity problems for
Orstein-Uhlenbeck-like semigroups emerged from quantum field theory and optimal
times have been obtained in the fermionic case in \cite{Ne73,CaLi93}, using
noncommutative $L_p$-theory. Moving further away from
the commutative situation, hypercontractivity results for free group algebras
were obtained in \cite{Bi97,JuPaPaPeRi15} (with respect to different
semigroups). Note that the connection between hypercontractivity and Property RD
in that case was already noticed by Biane \cite{Bi97}.

The study of hypercontractivity for discrete quantum group algebras was
initiated in \cite{FrHoLeUlZh17}, where a natural analogue of the heat semigroup
on the reduced $C^*$-algebra of orthogonal free quantum groups was studied. In
the Kac case, the authors of \cite{FrHoLeUlZh17} obtain the ultracontractivity
of these semigroups (at all times), as well as hypercontractivity with explicit
upper bounds for the optimal time to contractivity.

\bigskip

In the present article we pursue the study of Property RD for non Kac type
discrete quantum groups. We prove that non Kac and non amenable orthogonal free quantum groups do
not satisfy the property RD introduced in \cite{BhVoZa15}
(Theorem~\ref{thm_RD_fail}). Then we state and prove a weaker RD inequality
(Proposition~\ref{prop_expRD}) which holds for all orthogonal free quantum
groups, and which was already used without proof in \cite{VaVe07} in a slightly
less precise form.

In the second part of the article we continue the study of ultra- and
hypercontractivity for the heat semigroup on free orthogonal quantum groups. We
obtain in particular the first known results in the non Kac case, namely
ultracontractivity with a {\em strictly positive} optimal time
(Proposition~\ref{prop_ultra_non_kac}) and hypercontractivity for large time
(Proposition~\ref{prop_hyper_non_kac}). In the Kac case we sharpen the upper
bound of \cite{FrHoLeUlZh17} for the optimal time to hypercontractivity
(Theorems~\ref{thm1}, \ref{thm2} and \ref{thm3}), using a non-commutative
Khintchine type inequality (Theorem~\ref{thm:Khintchine}). We give as well a
lower bound for the optimal time to hypercontractivity
(Lemma~\ref{lem-necessary}). Motivated by these results, we end the article with
a conjectural formula for the asymptotical behavior of the optimal time to
hypercontractivity when the rank of the free orthogonal quantum group tends to
infinity.

The article is organized as follows. In Section~\ref{sec_prelim} we recall the
necessary preliminaries about compact quantum groups and Property RD on their
duals. Section~\ref{RDfail} is devoted to the study of Property RD on non Kac
type orthogonal free quantum groups. Finally in Section~\ref{sec_applic} we
produce applications to hypercontractivity as described above.

\bigskip

\noindent {\bf Acknowledgments.} M.B. was supported by NSF grant
DMS-1700267. R.V. was partially supported by the ANR project
ANR-19-CE40-0002. S-G.Y. was supported by the Natural Sciences and Engineering
Research Council of Canada and by the National Research Foundation of Korea (NRF) grant funded by the Korea government (MSIT) (No. 2020R1C1C1A01009681). R.V. thanks Holger Reich and the Algebraic Topology
group at the Freie Universit\"at Berlin for their kind hospitality in the
academic year 2019--2020.

\section{preliminaries}
\label{sec_prelim}
    
We assume that the reader is familiar with the basic notation and terminology on
compact and discrete quantum groups.  For details, we refer the reader to the
standard references \cite{Wo98, Ti08, NeTu13, MaVa98}.  In this paper we will
mainly be concerned with the class of free orthogonal quantum groups and their
associated dual discrete quantum groups.  We now recall these objects.

\subsection{Compact quantum groups}
    
A compact quantum group $\g$ is given by a Woronowicz $C^*$-algebra $C(\g)$,
which is in particular a unital Hopf-$C^*$-algebra with co-associative coproduct
$\Delta:C(\g) \to C(\g) \otimes C(\g)$.  We denote by $h$ the Haar state on
$C(\g)$, which is the unique state on $C(\g)$ satisfying
\[
(h \otimes \id) \Delta = (\id \otimes h)\Delta = h(\cdot) 1.
\]The Haar state induces the inner product $\langle f,g \rangle = h(f^*g)$ and
the norm $\|f\|_2 = h(f^*f)^{1/2}$ for $f$, $g \in C(\g)$. By completion we
obtain the GNS space $L_2(\g)$ with canonical cyclic vector $\xi_0$, and we
denote $\pi_h : C(\g) \to B(L_2(\g))$ the associated representation. The image
of $\pi_h$ is the reduced Woronowicz $C^*$-algebra denoted $C_r(\g)$ and the
associated von Neumann algebra is
$L_{\infty}(\g) = C_r(\g)'' \subset B(L_2(\g))$.

We then define $L_1(\g)$ as the predual of $L_{\infty}(\g)$ and consider the
natural embedding $L_{\infty}(\g)\hookrightarrow L_1(\g)$ given by
$x\mapsto h(\,\cdot\, x)$. Then $(L_{\infty}(\g),L_1(\g))$ is a compatible pair
of Banach spaces, which allows one to define the non-commutative $L_p$-spaces
$L_p(\g)=(L_{\infty}(\g),L_1(\g))_{1/p}$ by the complex interpolation method
\cite{Pi03}.  When the Haar state is tracial we have
$\norm{a}_{L_p(\g)}=h(|a|^p)^{1/p}$ for any $1\leq p<\infty$ and
$a\in L_{\infty}(\g)$.
    
\bigskip

A representation of $\g$ on a Hilbert space $H_v$ is an invertible element
$v \in M(K(H_v)\otimes C(\g))$ such that
$(\id\otimes\Delta)(v) = v_{12} v_{13}$, using the leg-numbering notation. Here, $v_{12}=v\otimes 1$ and $v_{13}=\sigma_{23}(v_{12})$ in $M(K(H_v)\otimes C(\g)\otimes C(\g))$, where $\sigma_{23}$ is the unique extension of the $*$-homomorphism on $K(H_v)\otimes C(\g)\otimes C(\g)$ given by $T\otimes a\otimes b\mapsto T\otimes b\otimes a$.

Furthermore, $v$ is called a unitary representation if $v^*v=\text{Id}_{H_v}\otimes 1_{C(\g)}=vv^*$. If
$H_v$ is finite-dimensional and equipped with an orthonormal basis $(e_i)_i$,
the associated matrix elements of $v$ are
$v_{ij} = (e_i^*\otimes\id)v(e_j\otimes\id)$. Then we have
$v = \sum e_ie_j^* \otimes v_{ij}$ and
$\Delta(v_{ij}) = \sum v_{ik}\otimes v_{kj}$.  For two unitary representations
$v \in M(K(H_v)\otimes C(\g))$ and $w \in M(K(H_w)\otimes C(\g))$, the tensor product
representation is $v\tp w = v_{13} w_{23} \in M(K(H_v\otimes H_w)\otimes C(\g))$.
    
Furthermore, we say that $v$ is {\it irreducible} if
$\mathrm{Mor}(v,v):= \left \{T\in B(H_v): v (T\otimes 1)=(T\otimes
  1)v\right\}=\Comp\cdot\id_{H_v}$.
We denote by $\mathrm{Irr}(\g)$ the set of all irreducible unitary
representations of $\g$ up to unitary equivalence. For each
$\alpha \in \mathrm{Irr}(\g)$ we choose $u = u^\alpha \in \alpha$ and denote
$H_\alpha = H_u$ (which is always finite-dimensional). The coefficients of $u^\alpha$ with respect to some orthonormal basis $(e_i)_i \subset H_\alpha$ are denoted
$u^\alpha_{i,j}$. The multiplicity of an irreducible representation $u$ in
another representation $v$ is
$\mathrm{mult}(u\subset v) = \dim \mathrm{Mor}(u,v)$.
    
There is, for each irreducible unitary representation $u$, a uniquely defined positive
element $Q_u \in B(H_u)$ such that $d_u := \Tr(Q_u) = \Tr(Q_u^{-1})$ and such
that the following orthogonality relations hold
\begin{equation} \label{eq:ortho_relations}
  \begin{aligned}[c]
    h(u_{ij}^* u_{kl}) &= d_u^{-1} \delta_{jl} (e_k\mid Q_u^{-1}e_i), \\
    h(u_{kl} u_{ij}^*) &= d_u^{-1} \delta_{ik} (e_j\mid Q_u e_l).
  \end{aligned}
\end{equation}
The number $d_u$ is called the {\it quantum dimension} of $u$, as opposed to the
classical dimension $n_u = \dim H_u$. The compact quantum group $\g$ is said to
be of {\it Kac type} if $Q_\alpha = \id_{H_\alpha}$ for all
$\alpha \in \mathrm{Irr}(\g)$.  This is equivalent to the Haar state $h$ being
tracial.
    
The coefficients $u^\alpha_{i,j}$ of irreducible unitary representations span a dense
subalgebra $\mathcal{O}(\g) \subset C(\g)$ which is a Hopf algebra with respect
to the restriction of the coproduct $\Delta$. We recall that $h$ is faithful on
$\mathcal{O}(\g)$ and we shall identify $\mathcal{O}(\g)$ with its image in
$C_r(\g)$ through the GNS representation $\pi_h$ --- in particular we identify a representation $v$ and its image
$(\id\otimes\pi_h)(v) \in B(H_v)\otimes C_r(\g)$. Note also that
$\mathcal{O}(\g)$ is dense in $L_p(\g)$ for any $1\leq p<\infty$.

\bigskip

A compact quantum group $\g$ is said to be a compact matrix quantum group if
there exists a finite generating subset
$\left \{\alpha_1,\cdots, \alpha_n\right\}$ of $\mathrm{Irr}(\g)$ in the sense
that any irreducible unitary representation $u^{\alpha}$ appears as an
irreducible component of a tensor product representation
$u^{\alpha_{m_1}}\tp u^{\alpha_{m_2}}\tp \cdots \tp u^{\alpha_{m_k}}$ for some
$k\in \n$ and $1\leq m_1,m_2,\cdots, m_k\leq n$. In this case, for any $\alpha$,
the minimal number $k \in \n_0$ required to generate $u^{\alpha}$ as a subrepresentation as above is called the {\it length of
  $\alpha$}, and denoted $|\alpha| = k$. The length at the trivial representation is $0$. We say that a non-zero element $f \in C(\g)$ or $C_r(\g)$ has
length $k$ if it can be written as a linear combination of coefficients
$u^\alpha_{i,j}$ with irreducible representations $\alpha$ of length $k$. We
denote $p_k\in B(L_2(\g))$ the orthogonal projection onto the subspace of
$L_2(\g)$ spanned by elements of length $k$.

\subsection{Dual algebras}
  
Associated to each compact quantum group $\g$ is its dual discrete quantum group
$\widehat{\g}$.  For us the main object of interest will be the algebra
\begin{displaymath}
  \ell_\infty(\widehat\g) = \{a \in \textstyle\prod_{\alpha\in\mathrm{Irr(\g)}}B(H_\alpha) :
  (\|a_\alpha\|)_\alpha ~ \text{bounded}\}
\end{displaymath}
and the subalgebras $c_{00}(\widehat\g)$, $c_0(\widehat\g)$ of sequences with
finite support, resp. converging to $0$.  For each $\alpha\in \mathrm{Irr}(\g)$
we denote $p_\alpha$ the corresponding minimal central projection in any of
these algebras. We use the same notation $p_\alpha$ for the orthogonal
projection onto the subspace of $L_2(\g)$ spanned by the GNS images of the
coefficients $u_{i,j}^\alpha$ --- indeed there is a natural representation of
$c_0(\widehat\g)$ on $L_2(\g)$ which realizes this identification.
    
The algebras $c_0(\widehat\g)$ and $C(\g)$ are related through the ``multiplicative
unitary'' $V = \bigoplus_\alpha u^\alpha \in M(c_0(\widehat\g)\otimes C(\g))$.
We endow $c_0(\widehat\g)$ and $\ell_\infty(\widehat\g)$ with the coproduct
$\hat\Delta$ such that $(\hat\Delta\otimes\id)(V) = V_{13}V_{23}$. By definition
this coproduct is related to the tensor product construction for
representations, more precisely we have, for all $\alpha$, $\beta$,
$\gamma\in\mathrm{Irr}(\g)$, $a\in B(H_\gamma)$ and
$T\in\mathrm{Mor}(\gamma, \alpha\tp\beta)$, the following identity in
$B(H_\gamma, H_\alpha\otimes H_\beta)$:
\begin{equation*}
  (p_\alpha\otimes p_\beta)\hat\Delta(a) T = T a.
\end{equation*}

There is a distinguished weight $\hat h$ on $\ell_\infty(\widehat\g)$, called
the {\it left Haar weight}, given by
\begin{displaymath}
  \hat h(a) = \sum_{\alpha\in \mathrm{Irr}(\g)} d_\alpha 
  \mathrm{Tr}(Q_\alpha a_\alpha) \qquad  
  (a = (a_\alpha)_\alpha \in c_{00}(\widehat\g)).
\end{displaymath}
We denote again $\|a\|_2 = \hat h(a^*a)^{1/2}$ the norm on $c_{00}(\hat\g)$
associated with this weight. By restriction and tensor product one obtains as
well norms, still denoted $\|\cdot\|_2$, on $B(H_{\alpha})$ and
$B(H_{\beta}\otimes H_{\gamma})$, associated to the inner products
$\langle a_1,a_2 \rangle= d_\alpha\mathrm{Tr} (Q_{\alpha}a_1^*a_2) $ for all
$a_1,a_2\in B(H_{\alpha})$ and
$ \langle x_1,x_2\rangle=d_\beta d_\gamma\mathrm{Tr}((Q_{\beta}\otimes
Q_{\gamma})x_1^*x_2) $
for all $x_1,x_2\in B(H_{\beta}\otimes H_{\gamma})$. Note that the collection of
matrices $Q_\alpha$ defines an algebraic (in general unbounded) multiplier
$Q = (Q_\alpha)_\alpha$ of $c_{00}(\hat\g)$, the {\em modular element}.
   
\bigskip
    
The analogue of the classical Fourier transform is the linear map
$\mathcal{F} : c_{00}(\widehat \g) \to C(\g)$ given by
$\mathcal{F}(a) = (\hat h\otimes \id)(V(a\otimes 1))$. Explicitly, we have
\begin{displaymath}
  \mathcal{F}(a)= \sum_{\alpha\in \mathrm{Irr}(\g)} \sum_{i,j=1}^{n_{\alpha}}
  d_{\alpha}(a_{\alpha}Q_{\alpha})_{j,i}u^{\alpha}_{i,j} \in C_r(\g).
\end{displaymath}
The Haar state $h$ on $\g$ and the left Haar weight $\hat h$ on $\widehat\g$ are
related through the Plancherel Theorem, which asserts that for any
$a = (a_\alpha)_{\alpha\in \mathrm{Irr}(\g)} \in c_{00}(\widehat \g)$, we have
$\hat h(a^*a) = h (\mathcal{F}(a)^*\mathcal{F}(a))$.

Let us note the following algebraic properties of the Fourier transform. Recall
that for $f \in \mathcal{O}(\g)$, $\varphi\in \mathcal{O}(\g)^*$ we denote
$f*\varphi = (\varphi\otimes\id)\Delta(f)$ and
$\varphi*f = (\id\otimes\varphi)\Delta(f)$. Then we have, for
$a\in c_{00}(\widehat\g)$, $\varphi \in \mathcal{O}(\g)^*$:
\begin{displaymath}
  \varphi*\mathcal{F}(a) = \mathcal{F}(ba) 
  \qquad\text{and}\qquad
  \mathcal{F}(a)*\varphi = \mathcal{F}(ab^Q) 
\end{displaymath}
where $b = (\id\otimes\varphi)(V)$ is an algebraic multiplier of
$c_{00}(\widehat\g)$ and $b^Q = QbQ^{-1}$. On the other hand for $a$,
$b \in c_{00}(\widehat\g)$ we have
$\mathcal{F}(a)\mathcal{F}(b) = \mathcal{F}(a\star b)$ where $a\star b$ is the
unique element of $c_{00}(\widehat\g)$ such that
$(\hat h\otimes\hat h) (\widehat{\Delta}(c) (a\otimes b)) = \hat h(c (a\star b))$ for all
$c\in c_{00}(\widehat\g)$.  The map $a \otimes b \mapsto a\star b$ defined above
is referred to as the {\it convolution product} on $c_{00}(\widehat \g)$.

We say that $\widehat\G$ is finitely generated when $\g$ is a compact matrix
quantum group. Having fixed a generating subset in $\mathrm{Irr}(\g)$, we put
$p_n = \sum_{|\alpha|=n} p_\alpha \in c_{00}(\hat\g)$. This is compatible with
the notation $p_n \in B(L_2(\g))$ introduced previously, in the sense that we
have $\mathcal{F}(p_na)\xi_0 = p_n \mathcal{F}(a)\xi_0$ for any $n\in\n_0$ and
$a\in c_{00}(\hat\g)$.

\subsection{The free orthogonal quantum groups}
    
We now come to the main objects of study in this paper.  Let $N\in \n$,
$N\geq 2$ and $F\in GL_N(\Comp)$ such that $F\overline{F}=\pm 1$.  The {\it free
  orthogonal quantum group} is the compact quantum group
$O^+_F=(C(O_F^+), \Delta)$, where
\begin{enumerate}
\item $C(O_F^+)$ is the universal unital $C^*$-algebra generated by $N^2$
  elements $u_{i,j}$, $1\leq i,j\leq N$, satisfying the relations making $u$
  unitary and $u=(F\otimes 1)u^c(F^{-1}\otimes 1)$, where
  $u=\left (u_{i,j}\right )_{1\leq i,j\leq N}\in M_N(\Comp)\otimes C(O_F^+)$ and
  $u^c=\left (u_{i,j}^*\right )_{1\leq i,j\leq N}$.
\item $\Delta:C(O_F^+)\rightarrow C(O_F^+)\otimes C(O_F^+)$ is the unital
  $*$-homomorphism determined by
  $\Delta(u_{i,j}) = \sum_{k=1}^N u_{i,k}\otimes u_{k,j}$.
\end{enumerate}

The compact quantum group $O_F^+$ is a compact matrix quantum group and we
choose the {\em fundamental representation}
$u= (u_{i,j})_{i,j} \in B(\C^N)\otimes C(O_F^+)$, coming from the canonical
generators of $C(O_F^+)$, as the (unique) generating representation. Then it is
known from \cite{Ba96} that for each $k\in\n_0$ there is a unique irreducible
representation (up to equivalence) of length $k$, which is equivalent to its
conjugate. We denote this class $k$, yielding an identification of
$\mathrm{Irr}(O_F^+)$ with $\n_0$. We have in particular $u^0 = 1_{C(\g)}$ (the
{\it trivial representation}), and
$u^1 = u = (u_{i,j}) \in B(H_1) \otimes C(\g)$ with $H_1 = \Comp^N$.

One can check that $Q_1 = F^{\text{tr}}\bar F$, so that $d_1 = \Tr(F^*F)$. There exists a
unique $q \in (0,1]$ such that $d_1 = q + q^{-1}$ and we denote also
$N_q = d_1 = q + q^{-1}$. On the other hand one can see that
$\|Q_k\| = \|Q_1\|^k = \|F\|^{2k}$ for all $k\in\n$, and that $O_F^+$ is of Kac
type iff $F$ is unitary. This is typically the case of $F = I_N$ and we denote
in this case $O_N^+ := O_{I_N}^+$.

It is moreover known that $u^m\tp u^n$ is unitarily equivalent to
$u^{|m-n|}\oplus u^{|m-n+2|}\oplus \cdots \oplus u^{m+n}$. We denote by
$P_l=P^{m,n}_l$ the orthogonal projection from $H_m \otimes H_n$ onto $H_l$ for
any one of $l = |m-n|,|m-n|+2,\cdots,m+n$. We have in particular $n_0=1,n_1=N$,
$n_1 n_{k+1}=n_{k+2}+n_k$ for all $k\in \n_0$ and $d_0=1$, $d_1=N_q:=\Tr(F^*F)$,
$d_1d_{k+1}=d_{k+2}+d_k$ for all $k\in \n_0$. Finally, it was also shown by
Banica \cite{Ba96} that the {\em fundamental character} $\chi_1 = \sum_{i=1}^N u_{ii}$ is a
semicircular element (on $[-2,2]$) with respect to the Haar state.

\subsection{Property RD and its generalizations}
\label{subsec_intro_RD}
    
In the case when $\widehat\g$ is a classical discrete group $\Gamma$, the
Property of Rapid Decay amounts to controlling the norm of
$C_r(\g) = C^*_r(\Gamma)$ from above by the $2$-norm. More precisely a discrete
group $\Gamma$ has Property RD if there exists a polynomial $P$ such that
\begin{equation}
  \label{eq:class_RD}
  \|x\|_{C^*_r(\Gamma)} \leq P(k) \|x\|_2
\end{equation}
for all $k \in \n_0$ and all $x \in C^*_r(\Gamma)$ supported on elements of
length $k$ in $\Gamma$, with respect to some fixed length (for instance a word
length if $\Gamma$ is finitely generated). Note that the reverse inequality
$\|x\|_2 \leq \|x\|_{C^*_r(\Gamma)}$ is always true.
    
A quantum generalization of Property RD was introduced in \cite{Ve07} by means
of the same inequality~\eqref{eq:class_RD}, with appropriate notions of length
and support as introduced above. It was shown in the same article that Property RD holds for the
dual of $O_N^+$ but fails for the dual of any compact quantum group $\g$ which
is not of Kac type. Later, a modification of the quantum definition was proposed
in \cite{BhVoZa15} so as to accommodate non-Kac examples such as $SU_q(2)$, and
more generally quantum groups $\g$ with (classical) polynomial growth. This
modification is obtained by replacing the $2$-norm on the right-hand side of
\eqref{eq:class_RD} by a still ``easily computable'' twisted $2$-norm.
    
\bigskip
    
In this setting, ``easily computable'' means a norm of the form
$\|f\|_\varphi = \|\varphi\star f\|_2$ or $\|f\star\varphi\|_2$ for
$f \in \mathcal{O}(\g)$, with $\varphi \in \mathcal{O}(\g)^*$ fixed. Using the
fact that the Fourier transform is isometric, this can also be written
$\|\mathcal{F}(a)\|_\varphi = \|Da\|_2$ or $\|aD^Q\|_2$ for $a \in c_{00}(\hat\g)$, where
$D = (\id\otimes\varphi)(V)$, and these norms can indeed be computed by
multiplying matrices and summing their traces. In this picture the twisted
Property RD takes the form $\|\mathcal{F}(a)\|_{C_r(\g)} \leq P(k) \|Da\|_2$ or
$\| \mathcal{F}(a) \|_{C_r(\g)} \leq P(k) \|aD'\|_2$ if $\mathcal{F}(a)$ is of length $k$, for some
fixed algebraic multiplier $D$ or $D'$ of $c_{00}(\widehat\g)$. Observe that by
polar decomposition one can assume $D>0$ (resp. $D'\sqrt Q>0$) without changing
the associated twisted norm.

Of course one could always achieve such inequalities by taking a central
multiplier $D =(b_\alpha I_{H_{\alpha}})_\alpha$ with weights $b_\alpha$ growing
sufficiently rapidly (see for example \cite{VaVe07} and the discussion at the
beginning of Section~\ref{sec_weak_RD}). However, for some applications (e.g.,
to the metric approximation property \cite{Br12}, and to non-commutative
geometry \cite{BhVoZa15}), it is desirable to use ``natural'' or ``optimal''
elements $D$, $D'$.

We note that the authors of \cite{BhVoZa15} choose the twisted $2$-norm in such a way that
$\left \{\sqrt{n_{\alpha}}u^{\alpha}_{i,j}\right\}_{\substack{1 \le i,j\leq n_{\alpha} \\
  \alpha \in \text{Irr}(\g)}}$ forms an orthonormal basis,
as it is in the case of Kac type compact quantum groups. An easy inspection with
our conventions shows that the only twisted norm with this property is
$\|\mathcal{F}(a)\|_{\varphi} := \|a\sqrt C\|_2$, where
\begin{align}\label{C}
  C = \Big(\frac{d_\alpha}{n_\alpha}Q_\alpha\Big)_\alpha
\end{align} 
is the canonical element used in \cite{BhVoZa15} to define their twisted
$2$-norms. In the following definition we fix a multiplier
$D=(D_{\alpha})_{\alpha\in \mathrm{Irr}(\g)}$ of $c_{00}(\widehat\g)$, we
consider the associated twisted norms $\|a\|_{2,D} := \|aD\|_2$ for
$a \in c_{00}(\widehat\g)$, and we put
\begin{displaymath}
  \|f\|_{2,D} := \|\mathcal{F}^{-1}(f)\|_{2,D} = \|\mathcal{F}^{-1}(f)D\|_2
\end{displaymath}
for $f \in \mathcal{O}(\g)$. Observe that $D$ is uniquely determined by
$\|\cdot\|_{2,D}$ if we assume $D\sqrt Q \geq 0$.
    
\begin{definitionn}
  Let $\g$ be a compact matrix quantum group with a fixed family of generating
  irreducible representations and $D$ a multiplier of $c_{00}(\widehat\g)$.  We
  say that $\widehat{\g}$ has Property $RD_D$ if there exists a polynomial
  $P \in \R_+[X]$ such that for all $k\in\n_0$ and $f\in \mathcal{O}(\g)$ of
  length $k$, we have
  \[\|f\|_{C_r(\G)} \leq P(k) \|f\|_{2,D}. \]
\end{definitionn}
    
The property above can also be written
$\|\mathcal{F}(a)\|_{C_r(\g)} \leq P(k) \|a\|_{2,D}$ for all $k \in \n_0$ and
$a\in p_k c_{00}(\widehat\g)$.  Explicitly, Property RD$_D$ asks that
\begin{equation}
  \norm{\sum_{|\alpha|=k}
    \sum_{i,j=1}^{n_{\alpha}}d_{\alpha}(a_\alpha Q_{\alpha})_{j,i}u^\alpha_{i,j}}_{C_r(\g)}^2
  \leq P(k)^2 \sum_{|\alpha|=k} d_{\alpha} 
  \Tr(D_{\alpha}Q_{\alpha}D_{\alpha}^* a_{\alpha}^*a_{\alpha}).
\end{equation}
The Property RD considered in \cite{BhVoZa15} corresponds to the case
$D = \sqrt C$, which satisfies $D\sqrt Q \geq 0$ since $C$ commutes with $Q$. If $\g$ is of Kac type, the Property $RD_{\sqrt{C}}$ coincides with the property $RD$ in \cite{Ve07}. In particular, if $\widehat{\g}$ is a discrete group $\Gamma$, then Property $RD_{\sqrt{C}}$ is exactly same with the property RD of $\Gamma$.
    
\bigskip
    
We now restate \cite[Lemma~4.6]{Ve07} in a slightly more general form. Note that
in the case $\g = O_F^+$ equipped with the canonical generating representation,
there is only one irreducible representation $\alpha = k \in \mathrm{Irr}(\g)$
for each given length $k$, and the inclusions $u^l \subset u^k\otimes u^n$ are
multiplicity-free.
    
\begin{lemma}
  Let $\g$ be a compact matrix quantum group with a fixed family of generating
  irreducible representations. For $k$, $n \in \n_0$ and
  $\gamma \in \mathrm{Irr}(\g)$ we denote
  \begin{equation}
    \nu_{k,n}^\gamma = \sum_{|\alpha|=k,|\beta|=n} \frac{d_\alpha d_\beta}{d_\gamma} 
    ~ \mathrm{mult}(u^\gamma\subset u^\alpha \tp  u^\beta).
  \end{equation}
  Then the discrete quantum group $\widehat\g$ has Property $RD_D$ with respect
  to a multiplier $D$ {\bf iff} there exists a polynomial $P$ such that we have,
  for any $k$, $l$, $n \in \n_0$ and for every $a \in p_k c_0(\widehat\g)$,
  $b\in p_n c_0(\widehat\g)$:
  \begin{equation}\label{ineq1}
    \sum_{|\gamma|=l} \nu_{k,n}^\gamma
    \| \hat\Delta(p_\gamma) (a\otimes b) \hat\Delta(p_\gamma) \|^2_2 \leq 
    P(k)^2 \|aD\otimes b\|_2^2.
  \end{equation}
\end{lemma}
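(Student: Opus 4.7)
The plan is to prove both directions of the equivalence by first reformulating Property $RD_D$ as an $l$-stratified convolution inequality on $c_{00}(\widehat\g)$, then writing both sides in terms of intertwiners via the duality between convolution and $\hat\Delta$. This generalizes Lemma~4.6 of \cite{Ve07} by inserting the multiplier $D$.

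For the reduction, one uses the Plancherel isometry $\|\mathcal{F}(c)\|_{L_2(\g)}=\|c\|_2$ and the identity $\mathcal{F}(a)\mathcal{F}(b)\xi_0=\mathcal{F}(a\star b)\xi_0$ to see that Property $RD_D$ is equivalent to $\|a\star b\|_2\leq P(k)\|aD\|_2\|b\|_2$ for all $a\in p_kc_{00}(\widehat\g)$ and $b\in c_{00}(\widehat\g)$. For $b\in p_nc_{00}(\widehat\g)$, the element $a\star b$ is supported on lengths $l\in\{||\alpha|-|\beta||,\ldots,|\alpha|+|\beta|\}$ for $|\alpha|=k,|\beta|=n$, giving at most $2k+1$ values of $l$ (uniformly in $n$, the lower bound following from Frobenius reciprocity). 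Up to a $\sqrt{2k+1}$-factor loss, the convolution inequality is thus equivalent to the stratified version $\|p_l(a\star b)\|_2^2\leq P'(k)^2\|aD\|_2^2\|b\|_2^2$ for all $k,l,n$.

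For the intertwiner computation, fix $a\in p_\alpha c_{00}, b\in p_\beta c_{00}$ and pick an orthonormal basis $\{T_i\}\subset\mathrm{Mor}(\gamma,\alpha\tp\beta)$ satisfying $T_i^*T_j=\delta_{ij}\id_{H_\gamma}$. The duality $\hat h(c(a\star b))=(\hat h\otimes\hat h)(\hat\Delta(c)(a\otimes b))$ together with the intertwiner relation $\hat\Delta(c)T_i=T_ic$ (for $c\in B(H_\gamma)$) and the orthogonality relations (\ref{eq:ortho_relations}) yield the explicit formula
\[
p_\gamma(a\star b)=\frac{d_\alpha d_\beta}{d_\gamma}Q_\gamma^{-1}\sum_iT_i^*(a\otimes b)(Q_\alpha\otimes Q_\beta)T_i
\]
together with the identity $\hat\Delta(p_\gamma)_{\alpha,\beta}=\sum_iT_iT_i^*$. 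A direct 2-norm computation followed by Cauchy--Schwarz on the intertwiner index then produces the pointwise estimate $\|p_\gamma(a\star b)\|_2^2\leq\frac{d_\alpha d_\beta}{d_\gamma}\mathrm{mult}(u^\gamma\subset u^\alpha\tp u^\beta)\|\hat\Delta(p_\gamma)(a\otimes b)\hat\Delta(p_\gamma)\|_2^2$; summing over $|\alpha|=k,|\beta|=n,|\gamma|=l$ gives $\|p_l(a\star b)\|_2^2\leq\sum_{|\gamma|=l}\nu_{k,n}^\gamma\|\hat\Delta(p_\gamma)(a\otimes b)\hat\Delta(p_\gamma)\|_2^2$, showing (\ref{ineq1}) $\Rightarrow$ Property $RD_D$.

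For the converse, in the multiplicity-free Kac case (notably the main application to $O_F^+$), the Cauchy--Schwarz step is an equality and the pointwise estimate becomes an identity, so (\ref{ineq1}) follows directly from Property $RD_D$. In the general case, the converse additionally requires amplifying Property $RD_D$ to matrix coefficients, applied to matrix-valued $a$'s constructed from the basis $\{T_i\}$, to recover the full $\sum_{i,j}$-type sum in the LHS of (\ref{ineq1}). The main obstacle is the bookkeeping of the modular matrices $Q_\alpha,Q_\beta,Q_\gamma$ throughout the intertwiner formulas, since all 2-norms are $Q$-twisted: one must choose the intertwiner basis $\{T_i\}$ orthonormal with respect to an appropriate $Q$-twisted inner product on $\mathrm{Mor}(\gamma,\alpha\tp\beta)$ so that the scalar $\nu_{k,n}^\gamma$ emerges cleanly from the Cauchy--Schwarz step.
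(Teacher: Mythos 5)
Your reduction to the stratified convolution inequality $\|p_l(a\star b)\|_2 \leq P(k)\|aD\otimes b\|_2$ matches the paper's first step (which is handled via \cite[Proposition~3.5]{Ve07} and Plancherel, exactly as you describe), and your identification of the right object to compute --- $p_\gamma(a\star b)$ in terms of a basis of $\mathrm{Mor}(\gamma,\alpha\tp\beta)$ --- is also the right target. The divergence, and the gap, is in the second half.

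You write a direct formula for $p_\gamma(a\star b)$ in terms of intertwiners $T_i$ and then apply Cauchy--Schwarz over the intertwiner index. As you yourself note, this only yields the one-sided estimate $\|p_\gamma(a\star b)\|_2^2 \leq \nu_{k,n}^\gamma\|\hat\Delta(p_\gamma)(a\otimes b)\hat\Delta(p_\gamma)\|_2^2$, which gives the implication \eqref{ineq1}~$\Rightarrow$~$RD_D$, and you leave the converse as ``requires amplifying Property $RD_D$ to matrix coefficients'' without carrying this out. That is a genuine gap in the biconditional: for the $\Rightarrow$ direction you need not merely a bound but an exact identification of $\|p_\gamma(a\star b)\|_2$ with the left-hand side of \eqref{ineq1}, and your Cauchy--Schwarz step throws away exactly the information needed. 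There is also a second, smaller issue in the summation step over $(\alpha,\beta)$: $\|p_\gamma(a\star b)\|_2^2$ is not the sum of the pointwise contributions $\|p_\gamma(a_\alpha\star b_\beta)\|_2^2$ because the latter are not mutually orthogonal in $B(H_\gamma)$, so summing your pointwise inequality requires yet another Cauchy--Schwarz with a cost you do not account for.

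The paper's proof sidesteps both problems by computing $\|p_\gamma(a\star b)\|_2$ by duality rather than by a direct formula. Starting from $\hat h(c^* p_\gamma(a\star b)) = (\hat h\otimes\hat h)\bigl((p_k\otimes p_n)\hat\Delta(c)^*\,\hat\Delta(p_\gamma)(a\otimes b)\hat\Delta(p_\gamma)\bigr)$ and taking the supremum over $\|c\|_2\le 1$, the problem reduces to identifying the $2$-norm scaling of the map $c\mapsto (p_k\otimes p_n)\hat\Delta(c)$. This is settled by the single trace identity $\Tr\bigl((Q\otimes Q)(p_\alpha\otimes p_\beta)\hat\Delta(d)\bigr)=\mathrm{mult}(u^\gamma\subset u^\alpha\tp u^\beta)\,\Tr(Qd)$, which follows from $\hat\Delta(Q)=Q\otimes Q$ and the fact that $(p_\alpha\otimes p_\beta)\hat\Delta$ is an amplification on $B(H_\gamma)$. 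That identity at once handles all the $Q$-bookkeeping you were worried about (there is no need to hand-choose a $Q$-twisted-orthonormal basis of intertwiners), gives $\|(p_k\otimes p_n)\hat\Delta(c)\|_2^2=\nu_{k,n}^\gamma\|c\|_2^2$, and so produces $\|p_\gamma(a\star b)\|_2 = (\nu_{k,n}^\gamma)^{1/2}\|\hat\Delta(p_\gamma)(a\otimes b)\hat\Delta(p_\gamma)\|_2$ directly, from which the equivalence \eqref{ineq1}~$\Leftrightarrow$~$RD_D$ is immediate. If you want to salvage your direct approach, the missing ingredient is precisely this dual/isometry argument (or, equivalently, proving that the Cauchy--Schwarz step and the $(\alpha,\beta)$-summation lose nothing --- which is not obvious and is what the trace identity delivers for free).
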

    
\begin{proof}
  The proof is a straightforward extension of the ideas in the proof of \cite[Lemma~4.6]{Ve07} using our
  notation. Let us recall the main ideas for the convenience of the reader.
    
  First of all, Property $RD_D$ is equivalent to the fact that
  $\|p_l f p_n\|_{C_r(\g)} \leq P(k) \|f\|_{2,D}$ for all $k$, $l$, $n\in\n_0$
  and $f \in \mathcal{O}(\g)$ of length $k$, see \cite[Proposition~3.5]{Ve07}
  and \cite[Proposition~3.4]{BhVoZa15}. Using the Fourier transform, this means
  that we require
  \begin{equation}\label{ineq:RD-equiv}
\|p_l \mathcal{F}(a)\mathcal{F}(b)\xi_0\|_2 \leq P(k) \|a\|_{2,D} \|b\|_2 = P(k) \|aD \otimes b\|_2,
\end{equation}
  for all $a$ of length $k$ and $b$ of length $n$. Moreover we have
  $\|p_l \mathcal{F}(a)\mathcal{F}(b)\xi_0\|_2 = \|p_l \mathcal{F}(a\star
  b)\xi_0\|_2 = \|\mathcal{F}(p_l(a\star b))\|_2 = \|p_l(a\star b)\|_2$
  --- indeed by definition of $p_l$ (in $c_0(\widehat\g)$ and $B(L_2(\g))$) and
  of $V$ we have
  $(1\otimes p_l)V(1\otimes\xi_0) = (p_l\otimes\id)V(1\otimes\xi_0)$. Then we
  can decompose into orthogonal components:
  $\|p_l(a\star b)\|_2^2 = \sum_{|\gamma|=l} \|p_\gamma(a\star b)\|_2^2$.
    
  Then by definition of the convolution product we can write, for any
  $c \in c_0(\widehat\g)$:
  \begin{align*}
    \hat h(c^* p_\gamma(a\star b)) &= (\hat h\otimes\hat h)
                                (\hat\Delta(c)^*\hat\Delta(p_\gamma)(a\otimes b)) \\ 
                              &= (\hat h\otimes\hat h)
                                ((p_k\otimes p_n)\hat\Delta(c)^*\hat\Delta(p_\gamma)(a\otimes b)
                                \hat\Delta(p_\gamma)). 
  \end{align*}
  Note that $p_\gamma$ is central in $c_0(\widehat\g)$ and that
  $\hat\Delta(p_\gamma)$ is $(\hat h\otimes\hat h)$-central. To
  obtain the expression of $\|p_\gamma(a\star b)\|_2^2$ which appears in the
  left-hand side of~\eqref{ineq1}, it remains to take the supremum over
  $c \in p_\gamma c_0(\widehat\g)$, with $\|c\|_2\leq 1$. We show below that we
  have in fact
  $\norm{(p_k\otimes p_n)\hat \Delta(c)}_2^2=\nu_{k,n}^{\gamma}\norm{c}_2^2$,
  which yields the correct expression
  $\|p_\gamma(a\star b)\|_2 = (\nu_{k,n}^\gamma)^{1/2}
  \|\hat\Delta(p_\gamma)(a\otimes b)\hat\Delta(p_\gamma)\|_2$
  so that~\eqref{ineq1} results from~\eqref{ineq:RD-equiv}.

  Indeed we have $\hat\Delta(Q) = Q\otimes Q$ in the multiplier algebra of
  $c_{00}(\widehat\g)\otimes c_{00}(\widehat\g)$, and on the matrix algebra
  $p_\gamma c_0(\widehat\g) = B(H_\gamma)$ the $*$-homomorphism
  $(p_\alpha\otimes p_\beta)\hat\Delta$ is an amplification with the same
  multiplicity as the inclusion $u^\gamma \subset u^\alpha \tp
  u^\beta$. Thus we have
\begin{equation}
\Tr\left ( (Q\otimes Q) (p_{\alpha}\otimes p_{\beta})\hat \Delta (d)\right )=\text{mult}(u^{\gamma}\subseteq u^{\alpha}\tp u^{\beta})\Tr(Q d)
\end{equation}
for any $d\in B(H_{\gamma})$.
As a
  result we can write
  \begin{align*}
    (\hat h\otimes \hat h)\left( (p_k\otimes p_n)\hat\Delta (p_\gamma d)
\right) &= 
                                                                    \sum_{|\alpha|=k, |\beta|=n} d_\alpha d_\beta 
                                                                    (\Tr\otimes\Tr)[(p_\alpha\otimes p_\beta)(Q\otimes Q)\hat\Delta (p_\gamma d)] \\
                                                                  &=  \sum_{|\alpha|=k, |\beta|=n} d_\alpha d_\beta 
                                                                    \mathrm{mult}(u^\gamma \subset u^\alpha \tp u^\beta) \Tr (Q p_\gamma d) 
                                                                    = \nu_{k,n}^\gamma \hat h(p_\gamma d).
  \end{align*}     
  Taking $d = c^*c$ we obtain
  $\norm{(p_k\otimes p_n)\hat \Delta(c)}_2^2=\nu_{k,n}^{\gamma}\norm{c}_2^2$ as
  claimed. 
\end{proof}

\section{On Property RD$_D$ for $\widehat{O_F^+}$} \label{RDfail}

In this section we turn our attention to the duals of the free orthogonal
quantum groups $O^+_F$, establishing some necessary conditions for Property
RD$_D$ to hold for a given multiplier $D$.  In this case Property $RD_D$ with
respect to a multiplier $D$ and a polynomial $P$ is characterized by the
following multiplicity-free version of \eqref{ineq1}:
\begin{equation}\label{eq:ORD1}
  \|\hat\Delta(p_l)(a\otimes b)\hat\Delta(p_l)\|_2 \leq \sqrt{\frac{d_l}{d_k d_n}} P(k) \|aD_k\otimes b\|_2,
\end{equation}
for all $k$, $l$, $n$ such that $u^l \subset u^k\otimes u^n$, $a\in B(H_k)$ and
$b\in B(H_n)$.
    
\bigskip
    
Here the $2$-norms are the ones coming from the weight $\hat h$, but one can use
as well the twisted Hilbert-Schmidt norms, e.g.  $\|a\|^2_\HS = \Tr(Q_ka^*a)$
for $a\in B(H_k)$, since these two norms only differ by a scalar factor
$\sqrt{d_k}$. Moreover, if we fix an isometric intertwiner (unique up to a
phase) $v = v_l^{k,n} \in \mathrm{Mor}(u^l,u^k\otimes u^n)$ we have
$\|\hat\Delta(p_l)(a\otimes b)\hat\Delta(p_l)\|_\HS = \| vv^* (a\otimes b) vv^*
\|_\HS = \| v^* (a\otimes b) v \|_\HS$
--- notice that the last norm is the twisted Hilbert-Schmidt norm on $B(H_l)$.
    
We can moreover give an explicit form to the intertwiners $v_l^{k,n}$ as
follows. For each $n\in\n_0$ the tensor power representation
$u^1 \tp \cdots\tp u^1$ contains a unique copy of $u^n$, we choose for $H_n$ the
corresponding subspace of $H_1^{\otimes n}$ and we denote
$P_n = p_1^{\otimes n}\hat\Delta^{n-1}(p_n) \in B(H_1^{\otimes n})$ the
corresponding orthogonal projection. We further fix an intertwiner (unique up to
a phase) $t_n \in \mathrm{Mor}(1,u^n\tp u^n)$ such that $\|t_n\| = \sqrt{d_n}$
and we consider the intertwiners
$A^{k,n}_l=(P_k\otimes P_n)(\mathrm{id}\otimes t_r\otimes \mathrm{id})P_l \in
\mathrm{Mor}(u^l, u^k\otimes u^n)$,
where $r = (k+n-l)/2$. One can then take
$v_l^{k,n} = \| A^{k,n}_l \|^{-1} A^{k,n}_l$.
    
The (operator) norm of $A^{k,n}_l$ can be explicitly computed, see for example
\cite[Lemma 4.8]{Ve07} or \cite[Equation (6) and Proposition 3.1]{BrCo18}.  This
norm happens to be controlled from below and above, up to factors depending only
the parameter $0 <q < 1$ given by $q+q^{-1} = \text{Tr}(F^*F)$, as follows:
\begin{align*}
  \frac{1}{d_r} \le \|A_l^{k,n}\|^{-2}&=\frac{1}{d_r} \prod_{s=1}^r \frac{(1-q^{2+2s})(1-q^{2l-2r+2s})(1-q^{2m-2r+2s})}{(1-q^{2k+2+2s})(1-q^{2s})^2} \\
                                      & \le \frac{1}{[r+1]_q} \Big(\prod_{s=1}^r \frac{1}{1-q^{2s}}\Big)^3 \le \frac{1}{d_r} \Big(\prod_{s=1}^\infty \frac{1}{1-q^{2s}}\Big)^3, 
\end{align*}
where $l = k+n-2r$.  If we put \begin{align} \label{eq:c} 1 < C(q) =
  \frac{1}{(1-q^2)}\Big(\prod_{s=1}^\infty \frac{1}{1-q^{2s}} \Big)^3,
                               \end{align}
                               and use the inequality
                               \[
                               (1-q^2)^3 \le \frac{d_kd_n}{d_ld_r^2} \le
                               (1-q^2)^{-2}
                               \]
                               which follows from the dimension formula
                               $d_n = \frac{q^{n+1} - q^{-n-1}}{q-q^{-1}}$, we
                               get
                               \begin{align} \label{norm}(1-q^2)^{3/2}\Big(\frac{d_l}{d_kd_n}\Big)^{1/2}
                                 \le \|A_k^{l,m}\|^{-2} \le
                                 C(q)\Big(\frac{d_l}{d_kd_n}\Big)^{1/2}.
                               \end{align}

                               Inequality \eqref{norm} shows that
                               $\|A_l^{k,n}\|^{-2}$ compensates exactly for the
                               analogous factor of the right-hand side of the
                               $RD$ inequality \eqref{eq:ORD1} which can thus be
                               rewritten in the equivalent formulation
                               \begin{equation} \label{eq:ORD2} \| (A^{k,n}_l)^*
                                 (a\otimes b) A^{k,n}_l \|_\HS \le P(k)
                                 \|aD_k\otimes b\|_\HS.
                               \end{equation}
                               In \eqref{eq:ORD2}, it is important to use the
                               twisted Hilbert-Schmidt norms since the matrix
                               spaces are no longer the same on both sides.

\begin{remark}
  The universal constant $C(q)$ defined in \eqref{eq:c} will make several
  appearances in the remainder of the paper.
\end{remark}

\bigskip
        
Note that we have $u^l \subset u^k\tp u^n$ {\bf iff}
$l\in\{|n-k|,|n-k+2|,\ldots, n+k\}$. One can obtain necessary conditions for
Property $RD_D$ by fixing the value of $l$. More specifically we say that the
dual of $O_F^+$ satisfies Property $RD_D^0$ (resp. $RD_D^{\mathrm{max}}$) for
some polynomial $P$ if the above inequality is satisfied for all $k=n \in\n_0$
and $l=0$ (resp. for all $k$, $n\in\n_0$ and for $l=k+n$).
    
In the case of $RD_D^0$ we have simply
$A^{k,n}_l = A^{n,n}_0 = t_n : \Comp \to H_n\otimes H_n$ so that Property
$RD_D^0$, with respect to the polynomial $P$, is equivalent to the fact that
\begin{equation}
  | t_n^* (a\otimes b) t_n | \le P(n) \|aD_n\otimes b\|_\HS.
\end{equation}
for all $n\in\n_0$ and $a$, $b\in B(H_n)$. Note that $t_n$ can be written
uniquely as $t_n(1) = \sum_i e_i\otimes j_n(e_i)$, where the anti-linear map
$j_n : H_n \to H_n$ does not depend on the chosen orthonormal basis $(e_i)_i$ of
$H_n$, and recall that we have $Q_n = j_n^* j_n$ and $j_n^2 = \pm\id$. Then one
can compute (recalling that $(\zeta | j\xi) = (\xi | j^*\zeta)$ for an
anti-linear map $j$):
\begin{equation}\label{eq:quadratic}
  t_n^* (a\otimes b) t_n = \Tr(j_n^*b^*j_n a)
\end{equation}
Summing everything up, one can reformulate Property $RD_D^0$ as follows:
       
\begin{definitionn}
  We say that $\widehat{O_F^+}$ has $RD_D^0$ if there exists a polynomial $P$
  such that
  \begin{equation}\label{ineq5}
    \left |t_n^* (a\otimes b)t_n \right |= \Tr(j_n^* b^* j_n a) 
    \leq P(n)\norm{aD_n\otimes b}_\HS
  \end{equation}
  for all $a,b\in B(H_n)$ and $n\in \n_0$.
\end{definitionn}
    
It turns out that Property $RD_D^0$ for the dual of $O_F^+$ can be explicitly
characterized in terms of the matrices $D_n$ (and $Q_n$), as follows.

\begin{proposition}\label{prop1}
  The discrete quantum group $\widehat{O_F^+}$ has $RD_D^0$ with respect to a
  polynomial $P$ if and only if
  \begin{equation}
    \norm{Q_n^{-1/2} D_n^{-1} Q_n}\norm{Q_n^{1/2}}\leq P(n)~for~all~n\in \n.
  \end{equation}    
\end{proposition}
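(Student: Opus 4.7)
The plan is to reformulate the $RD^0_D$ inequality \eqref{ineq5} as a scalar norm inequality matching the target bound via three successive reductions.

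\emph{First}, I would dualize in $a$: with $N_1 := D_n Q_n D_n^*$, one has $\|aD_n\|_\HS^2 = \Tr(N_1 a^*a)$, whose dual norm in the trace pairing is $c \mapsto \|N_1^{-1/2} c\|_\HS$. Squaring and taking the supremum over $a$ in \eqref{ineq5} yields the equivalent sesquilinear inequality
\[ \Tr\!\left(j_n^* b\, j_n\, N_1^{-1}\, j_n^* b^* j_n\right) \leq P(n)^2\, \Tr(Q_n b^* b)\qquad (b\in B(H_n)). \]

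\emph{Next}, I would rewrite both sides via cyclicity of the trace as $\Tr(A b B b^*)$ with $A, B$ positive: the left-hand side becomes $\Tr(R_n\, b\, M'_n\, b^*)$ where $R_n := j_n j_n^*$ and $M'_n := j_n N_1^{-1} j_n^*$, and the right-hand side becomes $\Tr(b\, Q_n\, b^*)$. Using the vec identification $B(H_n) \cong H_n \otimes H_n$ together with the standard formula $\Tr(AXBX^*) = \mathrm{vec}(X)^*(B^T\otimes A)\mathrm{vec}(X)$, this sesquilinear comparison translates into the positive operator inequality $(M'_n)^T \otimes R_n \leq P(n)^2\, Q_n^T \otimes I$ in $B(H_n\otimes H_n)$. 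Conjugating by $Q_n^{-T/2}\otimes I$ and using $\|A\otimes B\| = \|A\|\|B\|$ for positive $A, B$ reduces this to the scalar inequality
\begin{equation}\label{eq-proof-plan}
  \|Q_n^{-1/2} M'_n Q_n^{-1/2}\| \cdot \|R_n\| \leq P(n)^2.
\end{equation}

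\emph{Finally}, I would identify both norms explicitly using the polar decomposition $j_n = U_n Q_n^{1/2}$ with $U_n: H_n \to H_n$ antilinear unitary. The key representation-theoretic input is that $j_n^2 = \pm\id$ rearranges (using $j_n^* j_n = Q_n$) to $U_n Q_n U_n^* = Q_n^{-1}$; by functional calculus for antilinear-unitary conjugation this gives the commutations
\[ U_n Q_n^{1/2} = Q_n^{-1/2} U_n, \qquad Q_n^{1/2} U_n^* = U_n^* Q_n^{-1/2}. \]
These collapse both norms in \eqref{eq-proof-plan}. On the one hand $R_n = U_n Q_n U_n^* = Q_n^{-1}$, hence $\|R_n\| = \|Q_n\| = \|Q_n^{1/2}\|^2$ (the identity $U_n Q_n U_n^* = Q_n^{-1}$ forces the spectrum of $Q_n$ to be invariant under $\lambda\leftrightarrow\lambda^{-1}$, so $\|Q_n\| = \|Q_n^{-1}\|$). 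On the other, applying the two commutations turns $Q_n^{-1/2} M'_n Q_n^{-1/2}$ into $U_n (Q_n N_1^{-1} Q_n) U_n^*$, whose norm is $\|Q_n N_1^{-1} Q_n\| = \|Q_n^{-1/2} D_n^{-1} Q_n\|^2$ (since $Q_n N_1^{-1} Q_n = X^*X$ with $X := Q_n^{-1/2} D_n^{-1} Q_n$). Substituting into \eqref{eq-proof-plan} gives exactly the inequality in the statement.

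I expect the main obstacle to be Step 3, and in particular the careful extraction of the commutation identity $U_n Q_n^{1/2} = Q_n^{-1/2} U_n$ from $j_n^2 = \pm\id$ while tracking linear-versus-antilinear compositions; without this collapse, the sandwich $Q_n^{-1/2}(\cdot)Q_n^{-1/2}$ in \eqref{eq-proof-plan} would not simplify to the matrix $Q_n N_1^{-1} Q_n$ demanded by the statement.
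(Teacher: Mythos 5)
Your proof is correct and arrives at the exact inequality in the statement, but the middle of the argument travels a different road from the paper's. Both proofs start the same way: dualize over $a$ against the weighted Hilbert--Schmidt norm coming from $N_1 = D_nQ_nD_n^*$, reducing $RD_D^0$ to a one-variable inequality $\Tr(\text{linear in }Q_n,j_n,N_1^{-1}, b, b^*) \le P(n)^2\Tr(Q_n b^*b)$. From there the paper immediately substitutes $b \mapsto j_n^* b j_n$, eliminating the antilinear conjugation $j_n$ at once in favour of powers of $Q_n$, then makes a second explicit substitution $b\mapsto Q_n^{1/2}bQ_n$ and invokes the elementary identity $\max_b \Tr(Mb^*Nb)/\Tr(b^*b) = \|M\|\|N\|$ for positive $M,N$. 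You instead keep $j_n$ all the way to the end, encode the sesquilinear comparison as a positive-operator inequality via the vec identification, and only then dispatch $j_n$ by its polar decomposition $j_n = U_nQ_n^{1/2}$ together with the commutation $U_nQ_n^{1/2} = Q_n^{-1/2}U_n$ (equivalently $U_nQ_nU_n^* = Q_n^{-1}$, which follows from $j_nj_n^* = Q_n^{-1}$ as you note). Your route makes the role of the antiunitary $U_n$ and the inversion-symmetry of $\mathrm{Spec}(Q_n)$ structurally explicit, which is instructive; the paper's route is shorter because it avoids antilinear bookkeeping after the first substitution. One small slip: the dual of $\Tr(N_1a^*a)$ with respect to the pairing $\Tr(c^*a)$ is $\|cN_1^{-1/2}\|_{\HS}$, not $\|N_1^{-1/2}c\|_{\HS}$ — since $D_n$ acts on $a$ by right multiplication, so does its inverse in the dual norm. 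This does not propagate, because your displayed inequality $\Tr(j_n^*b\,j_n\,N_1^{-1}\,j_n^*b^*j_n)\le P(n)^2\Tr(Q_nb^*b)$ is in fact the correct one ($\Tr(cN_1^{-1}c^*)=\Tr(c^*cN_1^{-1})$ by cyclicity), and everything downstream is fine. Also worth recording somewhere that the cyclic shifts you perform across antilinear factors are legitimate here only because all the relevant traces are real (indeed nonnegative); in general such a shift complex-conjugates the trace.
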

    
\begin{proof}
  Note that (\ref{ineq5}) can be written as
  \begin{equation*}
    |\Tr(b^* a) |^2 \leq P(n)^2 \norm{aD_n \otimes j_n^*bj_n}_\HS^2=P(n)^2\Tr(D_n^*Q_n D_na^*a)\Tr(Q_nj_n^*b^*j_nj_n^*bj_n).
  \end{equation*}
  We have moreover $D_n^*Q_nD_n = \tilde D_n^2$, where
  $\tilde D_n = D_n\sqrt Q_n$ is positive, and
  $\Tr(Q_nj_n^*b^*j_nj_n^*bj_n) = \Tr (j_nj_n^*j_nj_n^*b^*j_nj_n^*b) = \Tr
  (Q_n^{-1}b^*Q_n^{-1}bQ_n^{-1})$.
  Now we note that, by the Cauchy-Schwarz inequality (for the untwisted
  Hilbert-Schmidt scalar product), the maximum of
  $|\Tr(b^* a) |^2 / \Tr(\tilde D_n^2a^*a)$ equals $\Tr(\tilde D_n^{-2}b^*b)$,
  attained at $a = b \tilde D_n^{-2}$, so that $RD_D^0$ is equivalent to
  \begin{align*}
    \Tr(\tilde D_n^{-2}b^*b) \leq P(n)^2 \Tr (Q_n^{-1}b^*Q_n^{-1}bQ_n^{-1})
  \end{align*}
  Replacing $b$ with $Q_n^{\frac{1}{2}}b Q_n$, the above can be written as
  \begin{equation*}
    \Tr(Q_n\tilde D_n^{-2} Q_nb^*Q_n b)\leq P(n)^2\Tr(b^*b).
  \end{equation*}
  We note that, for positive matrices $M,N\in B(H_n)_+$,
  \[\max_{b\neq 0}\frac{\Tr(Mb^*Nb)}{\Tr(b^*b)}=\left (
    \max_{b\neq 0}\frac{\norm{N^{\frac{1}{2}}b M^{\frac{1}{2}}}_{HS}}{\norm{b}
      _{HS}} \right )^2\]
  equals $\|M\| \|N\|$, attained at $b=\xi \eta^*\in B(H_n)$, where $\xi,\eta$
  are unit vectors chosen to satisfy
  $\norm{N^{\frac{1}{2}}\xi}=\norm{N^{\frac{1}{2}}}$ and
  $ \norm{M^{\frac{1}{2}}\eta}=\norm{M^{\frac{1}{2}}}$. Therefore, $RD_D^0$ is
  equivalent to
  \begin{equation*}
    \|Q_n\tilde D_n^{-2}Q_n\| \|Q_n\| = \| \tilde D_n^{-1}Q_n\|^2 \|Q_n^{1/2}\|^2\leq P(n)^2~\mathrm{for~all~}n\in \n.
  \end{equation*}
  which gives us the desired conclusion.
\end{proof}
    
    \begin{remark}
      The same techniques apply if one tries to twist the $2$-norm from the
      other side, i.e. if one considers inequalities of the form
      $|t_n^*(a\otimes b)t_n| \leq P(n) \| D_na \otimes b\|_\HS$. Then one
      arrives at the condition $\|D_n^{-1} Q_n^{1/2}\| \|Q_n^{1/2}\| \leq P(n)$,
      which is equivalent to the condition of Proposition~\ref{prop1} if $D$ and
      $Q$ commute.
    \end{remark}
    
    
    Recall that \cite{BhVoZa15} take $D_k^2 = C_k = \frac{d_k} {n_k} Q_k$ to
    verify $RD_D$ for $SU_q(2)$ (more generally, the Drinfeld-Jimbo
    $q$-deformations $G_q$). It seems reasonable to consider a continuous family
    of variants of this multiplier by taking
    $D_k=(\frac{d_k}{n_k})^{|s|/2}Q_k^{s/2}$ with $s\in \mathbb{R}$. Recalling
    that $\|Q_k\| = \|Q_k^{-1}\| = \|Q_1\|^k$, we see that in that case
    Proposition \ref{prop1} reads
    \[\left ( \frac{n_k}{d_k} \right )^{|s|}\norm{Q_1}^{(|1-s|+1)k}\leq
    P(k)^2~\mathrm{for~all~}k\geq 0.\]
    
    However, the following theorem shows that this inequality is not satisfied
    for any non-Kac $O_F^+$ as soon as $N\geq 3$.
    
    \begin{theorem} \label{thm_RD_fail} Let $N\geq 3$, $s\in \mathbb{R}$ and
      consider the multiplier $D(s) = (D_k)_{k \in {\N_0}}$, with
      $D_k=\left (d_k/n_k\right )^{|s|/2}Q_k^{s/2}$.  Then $ \widehat{O_F^+}$
      has Property $RD_{D(s)}^0$ if and only if $O^+_F$ is of Kac type. In
      particular, all non-Kac $O_F^+$ do not have property $RD_{D(s)}$.
    \end{theorem}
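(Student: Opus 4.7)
The plan is to use Proposition~\ref{prop1} (explicitly written out for $D = D(s)$ in the paragraph just above the theorem), which reduces $RD_{D(s)}^0$ to the scalar estimate
\[
\Big(\frac{n_k}{d_k}\Big)^{|s|}\|Q_1\|^{(|1-s|+1)k} \leq P(k)^2 \qquad (k \in \N_0).
\]
In the Kac case, $\|Q_1\| = 1$ and $n_k = d_k$, so the left-hand side equals $1$ and $RD_{D(s)}^0$ holds trivially with $P \equiv 1$. The content of the theorem is the converse: for non-Kac $O_F^+$ with $N \geq 3$, the left-hand side grows super-polynomially in $k$ for every $s \in \R$.

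Write $\mu := \|Q_1\|$; non-Kac forces $\mu > 1$. Using the standard dimension formulas
\[
d_k = \frac{q^{k+1}-q^{-k-1}}{q-q^{-1}}, \qquad n_k = \frac{\lambda_N^{k+1}-\lambda_N^{-k-1}}{\lambda_N-\lambda_N^{-1}},
\]
where $\lambda_N = q_N^{-1} > 1$ is the larger root of $x + x^{-1} = N$, the left-hand side is asymptotically a positive constant times $\big((q\lambda_N)^{|s|}\mu^{|1-s|+1}\big)^k$, so super-polynomial growth is equivalent to the strict inequality $(|1-s|+1)\log\mu > |s|\log(q_N/q)$. A direct piecewise comparison shows that $|s|/(|1-s|+1) \leq 1$ for every $s \in \R$, with equality precisely when $s \geq 1$, so combined with $\log\mu > 0$ this inequality for all $s$ is equivalent to the single bound $\mu > q_N/q$.

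The main step---and in my view the technical heart of the argument---is to verify $\mu > q_N/q$ for every non-Kac $O_F^+$ with $N \geq 3$. The condition $F\bar F = \pm 1$ forces $|\det F|^2 = 1$, so $Q_1 = F^T\bar F$ is positive with $\det Q_1 = 1$ and $\Tr(Q_1) = q+q^{-1}$; its eigenvalues $\lambda_1 \geq \cdots \geq \lambda_N > 0$ therefore satisfy $\sum \lambda_i = q+q^{-1}$ and $\prod \lambda_i = 1$. By AM-GM applied to $\lambda_2,\ldots,\lambda_N$, the minimum of $\lambda_1 = \mu$ under these constraints is attained at $\lambda_2 = \cdots = \lambda_N =: c$, yielding $\mu = c^{-(N-1)}$ together with $c^{-(N-1)}+(N-1)c = q+q^{-1}$; the non-Kac condition corresponds precisely to $c \in (0,1)$. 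Rewriting $\mu q > q_N = \lambda_N^{-1}$ via these relations reduces the desired bound, after clearing denominators, to
\[
f(c) := c^{2N-2} - (N-1)\lambda_N\, c^N + \lambda_N(\lambda_N-1) > 0 \qquad \big(c \in (0,1)\big).
\]
One checks that $f(1) = \lambda_N^2 - N\lambda_N + 1 = 0$ (the defining relation of $\lambda_N$), $f(0) = \lambda_N(\lambda_N-1) > 0$ for $N \geq 3$, and $f'(c) = (N-1)\, c^{N-1}\,(2c^{N-2} - N\lambda_N) < 0$ on $(0,1)$; hence $f$ strictly decreases on $(0,1]$ from $f(0) > 0$ to $f(1) = 0$, which gives $f > 0$ on $(0,1)$ as required. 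This produces the failure of $RD_{D(s)}^0$, and the failure of $RD_{D(s)}$ then follows since $RD_{D(s)}$ implies $RD_{D(s)}^0$ by definition.
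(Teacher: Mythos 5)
Your reduction is sound up to a point: you correctly arrive at the same scalar inequality as the paper, namely $\|Q_1\| > q_N/q$ (the paper phrases this as $f(d_1) < f(n_1)\,\|Q_1\|$, which is equivalent since $f(d_1)=q^{-1}$ and $f(n_1)=q_N^{-1}$), and your piecewise analysis of $|s|/(|1-s|+1)$ is correct. The algebraic rewriting of $\mu q > q_N$ into $f(c)>0$ is also carried out correctly for the particular family of spectra you consider.

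The gap is in the optimization step. You claim that, for fixed trace $S=q+q^{-1}$ and determinant $1$, the minimum of $\lambda_1=\mu$ is attained when $\lambda_2=\cdots=\lambda_N=c$. In fact this configuration is the \emph{maximizer} of $\lambda_1$, not the minimizer: fixing $\lambda_1$, the AM--GM inequality $(S-\lambda_1)^{N-1}\lambda_1\ge (N-1)^{N-1}$ is an \emph{upper} constraint on $\lambda_1$ that becomes tight precisely when $\lambda_2=\cdots=\lambda_N$. The minimizer of $\mu$ under these constraints (with $\lambda_1$ required to be the largest eigenvalue) is instead the opposite extremal configuration $\lambda_1=\cdots=\lambda_{N-1}>\lambda_N$. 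A concrete check: for $N=3$, $S=4$, the configuration $\{c^{-2},c,c\}$ with $c^{-2}+2c=4$ gives $\mu\approx 2.78$, while $\{a,a,a^{-2}\}$ with $2a+a^{-2}=4$ gives $\mu\approx 1.85$; your argument only verifies the bound at the easy end $\mu\approx 2.78$. Since what is needed is a \emph{lower} bound on $\mu$, proving the inequality at the maximizer is not sufficient. You would have to redo the computation with $\lambda_1=\cdots=\lambda_{N-1}$ (where it does appear to hold, but by a different polynomial inequality), or better, exploit the inversion symmetry of the spectrum. The paper does the latter: ordering $\lambda_1\le\cdots\le\lambda_N$, the symmetry $\lambda\mapsto\lambda^{-1}$ of $\mathrm{Spec}(Q_1)$ forces $\lambda_1<1$ and $\lambda_2\le 1$ when $N\ge 3$, and a direct expansion of $(\sum\lambda_i)^2$ then yields $d_1^2<4+(N^2-4)\lambda_N^2$, which combined with $d_1<N\lambda_N$ gives $f(d_1)<f(n_1)\lambda_N$ in a few lines, with no optimization needed.

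A minor remark: you use $\lambda_N$ to denote the larger root of $x+x^{-1}=N$, whereas the paper uses $\lambda_N$ for the largest eigenvalue of $Q_1$; best to pick a different symbol (say $\rho_N=q_N^{-1}$) to avoid a clash if you revise along these lines.
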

    
\begin{proof}
  We only need to consider the case where $O^+_F$ is not of Kac type. I.e., we
  assume $Q_1 \ne I$.  Since
  $\left ( \frac{n_k}{d_k} \right )^{| s|}\norm{Q_1}^{(|1- s|+1)k}\geq \left (
    \frac{n_k}{d_k}\norm{Q_1}^k \right )^{|s|}$
  for all $k\in \n$, it suffices to show that $\frac{n_k}{d_k}\norm{Q_1}^k$ has
  exponential growth, i.e.
  \[\liminf_{k\rightarrow \infty} \left (\frac{n_k\norm{Q_1}^k}{d_k}\right
  )^{\frac{1}{k}}>1.\]
    
  First of all, we have $\lim_{k\rightarrow \infty}d_k^{\frac{1}{k}}=f(d_1)$ and
  $\lim_{k\rightarrow \infty}n_k^{\frac{1}{k}} =f(n_1)$ where
  $f(t)=\frac{t+\sqrt{t^2-4}}{2}$. Let us denote by
  $\lambda_1\leq \cdots\leq \lambda_{N}$ the eigenvalues of $Q_1$. Then, since
  the spectrum of $Q_1$ is symmetric under inversion, we have $\lambda_1<1$ and
  $\lambda_2\leq 1$.
    
  Then, in the expansion of
  $(\lambda_1+\cdots+\lambda_{N})^2=\sum_{i,j=1}^N \lambda_i \lambda_j$, we have
  $\lambda_1^2,\lambda_1\lambda_2,\lambda_2\lambda_1<1$, $\lambda_2^2\leq 1$ and
  the other terms are smaller than $ \lambda_N^2$.  From this observation we
  obtain
  \[d_1^2< 4+(N^2-4)\lambda_N^2,\]
  which together with the obvious estimate $d_1< N\lambda_N$ yields
  \begin{align*}
    f(d_1)&=\frac{d_1+\sqrt{d_1^2-4}}{2}< \frac{N\lambda_N+\sqrt{(N^2-4)\lambda_N^2}}{2}=f(n_1)\norm{Q_1}.
  \end{align*}
  Hence, we have
  $\liminf_{k\rightarrow \infty} \left (\frac{n_k\norm{Q_k}}{d_k}\right
  )^{\frac{1}{k}}=\frac{f(n_1)\norm{Q_1}}{f(d_1)}>1$.
\end{proof}
    
\begin{remark}
  On the other hand, one might try to consider the "opposite" case of
  $RD_D^{\max}$, which is satisfied iff we have
  $\|P_{k+n}(a\otimes b)P_{k+n}\|_\HS \leq P(k) \|aD_k\otimes b\|_\HS$ for all
  $k,n\in \n_0$ and $a\in B(H_k)$, $b\in B(H_n)$. In fact, when $D$ commutes
  with $Q$ it turns out that $RD_D^{\max}$ is a consequence of $RD_D^0$, thank
  to Proposition \ref{prop1}.
  
  Indeed we always have
  $\| P_{k+n}(a\otimes b)P_{k+n}\|_\HS \leq \|a\otimes b\|_\HS$ (using the fact
  that $P_{k+n}$ commutes with $Q_k\otimes Q_n$), so that
  $\|a\|_\HS \leq P(k) \|a D_k\|_\HS$ implies $RD_C^{\max}$. Performing the same
  analysis as in the proof of Proposition \ref{prop1} we see that this stronger
  condition is equivalent to $\|D_k^{-1}\| \leq P(k)$ for all $k$. On the other
  hand when $D$ commutes with $Q$ we can write
  $\|D_k^{-1}\| = \|Q_k^{-1/2} D_k^{-1} Q_k Q_k^{-1/2}\| \leq \|Q_k^{-1/2}
  D_k^{-1} Q_k\| \|Q_k^{-1/2}\|$,
  which makes the connection with the characterization of $RD_D^0$ given at
  Proposition \ref{prop1} since $ \|Q_k^{-1/2}\| = \|Q_k^{1/2}\|$.
\end{remark}
 
\begin{remark}
  The analysis of the above two subcases of Property $RD_D$ leads us to ask
  whether Property $RD_D$ is equivalent to $RD_D^0$ for $\widehat{O^+_F}$?
  I.e., Is Property $RD_D$ equivalent to the inequalities
  $\|Q_k^{-1/2} D_k^{-1} Q_k\| \|Q_k^{1/2}\| \leq P(k)$, at least when $D$ and
  $Q$ commute?
\end{remark}

\subsection{A Weaker Variant of Property RD}
\label{sec_weak_RD}

Despite the failure of $RD_{\sqrt C}$ for non-amenable, non-Kac type orthogonal
free quantum groups, one can prove a weaker RD inequality (corresponding to a
larger multiplier $D$) which holds for all orthogonal free quantum groups, and
also for all discrete quantum groups with polynomial growth. This inequality was
already stated (without proof) and used in \cite{VaVe07}, see Remark~7.6
therein. We provide below a slightly more precise statement and a proof.  In the
following section, we will see how this weakened property RD is applicable to
find {\it almost} sharp optimal time estimates for ultracontractivity of heat semigroups
on $O^+_F$.
    
\begin{proposition}\label{prop_expRD}
  Let $F\in GL_N(\Comp)$ be such that $F\bar F = \pm I_N$, $N\geq 2$. Then for
  any $k$, $l$, $n\in \n_0$ and $a\in B(H_k) \subset \ell_\infty(\hat O^+_F)$ we
  have
  \begin{align*}
    \|p_l\mathcal{F}(a)p_n\| \leq C(q) \|F\|^{2k} \|a\|_2 \quad\text{and}\quad   
    \|\mathcal{F}(a)\| \leq C(q) (k+1) \|F\|^{2k} \|a\|_2,
  \end{align*}
  where $C(q) >1$ is the constant defined by~\eqref{eq:c} for $0 < q < 1$ such
  that $\Tr(F^*F) = q+q^{-1}$.
\end{proposition}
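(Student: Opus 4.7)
My plan is to prove the sharper first inequality and then deduce the second one from it by a standard counting argument. For the reduction, I take $\xi \in L_2(\g)$ with $\xi = \sum_n p_n\xi$ and expand $\|\mathcal{F}(a)\xi\|^2 = \sum_l \|\sum_n p_l\mathcal{F}(a) p_n\xi\|^2$ using orthogonality of $(p_l)_l$. For each fixed $l$, only $n\in\{|l-k|,|l-k|+2,\ldots,l+k\}$ contributes (at most $k+1$ terms), so Cauchy--Schwarz bounds the inner sum by $(k+1)\sum_n\|p_l\mathcal{F}(a) p_n\xi\|^2$. For fixed $n$, the sum of $\|p_l\mathcal{F}(a) p_n\xi\|^2$ over $l$ is $\|\mathcal{F}(a) p_n\xi\|^2$, and applying the first inequality to each of the at most $k+1$ nonzero $l$-terms bounds it by $(k+1)C(q)^2\|F\|^{4k}\|a\|_2^2\|p_n\xi\|^2$. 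Summing then gives $\|\mathcal{F}(a)\xi\|^2 \le (k+1)^2C(q)^2\|F\|^{4k}\|a\|_2^2\|\xi\|^2$.

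For the main estimate, I would use that $\|p_l\mathcal{F}(a) p_n\|$ is the best constant $M$ such that $\|p_l(a\star b)\|_2 \leq M\|b\|_2$ for all $b \in B(H_n)$. Specializing the proof of the preceding Lemma to the multiplicity-free case (where $\nu_{k,n}^l = d_kd_n/d_l$) and using the identification $(p_k\otimes p_n)\hat\Delta(p_l) = vv^*$ with $v = v_l^{k,n}$ the isometric intertwiner yields
\[
\|p_l(a\star b)\|_2 = \frac{d_k d_n}{\sqrt{d_l}}\,\|v^*(a\otimes b) v\|_{\HS,\mathrm{tw},H_l},
\]
so that the first inequality reduces to the key estimate
\[
\|v^*(a\otimes b) v\|_{\HS,\mathrm{tw},H_l}^2 \;\leq\; C(q)^2\|F\|^{4k}\,\frac{d_l}{d_k d_n}\,\|a\|_{\HS,\mathrm{tw}}^2\|b\|_{\HS,\mathrm{tw}}^2.
\]

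To prove this, I would write $v = A_l^{k,n}/\|A_l^{k,n}\|$, so that the left-hand side factors as $\|A_l^{k,n}\|^{-2}\|(A_l^{k,n})^*(a\otimes b) A_l^{k,n}\|_{\HS,\mathrm{tw}}$. The inequality \eqref{norm} delivers $\|A_l^{k,n}\|^{-2} \leq C(q)\sqrt{d_l/(d_k d_n)}$, which accounts for part of the right-hand side. It then suffices to control $\|(A_l^{k,n})^*(a\otimes b) A_l^{k,n}\|_{\HS,\mathrm{tw}}$ by essentially $\|F\|^{2k}\|A_l^{k,n}\|^2\|a\|_{\HS,\mathrm{tw}}\|b\|_{\HS,\mathrm{tw}}$ up to a constant depending only on $q$. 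For this I would use the Temperley--Lieb-type description $A_l^{k,n} = (P_k\otimes P_n)(\id\otimes t_r\otimes\id) P_l$, together with the modular intertwining identity $v^*(Q_k\otimes Q_n) v = Q_l$ (equivalently the commutation $(Q_k\otimes Q_n) vv^* = v Q_l v^*$), to convert the twisted $Q_l$-weighted trace on $H_l$ into a $(Q_k\otimes Q_n)$-weighted trace supported on $v(H_l)\subset H_k\otimes H_n$.

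The hard part is recovering exactly the exponent $\|F\|^{4k}$. Naive operator-theoretic bounds such as $\|A^*XA\|_{\HS} \leq \|A\|_{\mathrm{op}}^2\|X\|_{\HS}$ only give $\|v^*(a\otimes b) v\|_{\HS,\mathrm{tw}}^2 \leq \|a\|_{\HS,\mathrm{tw}}^2\|b\|_{\HS,\mathrm{tw}}^2$, which combined with \eqref{norm} yields the too weak bound $\|p_l\mathcal{F}(a) p_n\|\leq\sqrt{d_k d_n/d_l}\,\|a\|_2$. Obtaining the correct $\|F\|^{4k}$ (rather than $\|F\|^{2(k+n)}$ or $\|F\|^{2(k+l)}$) requires treating $v$ not as a generic isometry but as the specific intertwiner whose range $v(H_l)$ is $(Q_k\otimes Q_n)$-invariant; this invariance is what prevents the accumulation of spurious powers of $\|Q_n\|$ or $\|Q_l\|$ and leaves only the $\|Q_k\|^2 = \|F\|^{4k}$ factor dictated by the length of $a$.
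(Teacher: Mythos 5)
Your reduction of the second inequality from the first (via the $(k+1)$-counting argument and Cauchy--Schwarz) is correct and is the same standard argument the paper cites. Your reduction of the first inequality to the key Hilbert--Schmidt estimate is also correct: using the Lemma in the multiplicity-free case and the normalization $v=\|A_l^{k,n}\|^{-1}A_l^{k,n}$ together with \eqref{norm}, everything does come down to showing
\[
\bigl\| (A_l^{k,n})^* (a\otimes b) A_l^{k,n} \bigr\|_\HS \;\le\; \|F\|^{2k}\,\|a\|_\HS\,\|b\|_\HS ,
\]
with twisted Hilbert--Schmidt norms on both sides. Up to this point you are following the same route as the paper.

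The gap is that you do not actually prove this estimate. Your final paragraph correctly diagnoses that the naive bound $\|v^* X v\|_\HS\le\|X\|_\HS$ loses the modular factor, and you point to the intertwining relation $v^*(Q_k\otimes Q_n)v=Q_l$ as "the key," but that relation only serves to identify the two twisted norms $\|v^*Xv\|_{\HS,Q_l}=\|vv^*Xvv^*\|_{\HS,Q_k\otimes Q_n}$; it does not by itself produce a bound with the factor $\|F\|^{2k}$. What actually produces that factor in the paper is a concrete Temperley--Lieb computation that you have not carried out: drop the outer projections so that the left-hand side is controlled by $\|(\id\otimes t_r^*\otimes\id)(a\otimes b)(\id\otimes t_r\otimes\id)\|_\HS$ with $r=(k+n-l)/2$; expand $a=\sum_i a_i\otimes E_i$ and $b=\sum_i j_r^*E_i j_r\otimes b_i$ where $(E_i)$ are matrix units from a $Q_r$-eigenbasis; observe that the cup $t_r$ contracts this to $\sum_i a_i\otimes b_i$; then apply the triangle inequality and Cauchy--Schwarz with the weights $\|E_i\|_\HS^{\pm 2}$. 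The modular factor emerges from the eigenvalue identity $\|E_i\|_\HS^{-2}=\lambda_p\lambda_q\|j_r^*E_i j_r\|_\HS^2\le\|Q_r\|^2\|j_r^*E_i j_r\|_\HS^2$, and one finishes with $\|Q_r\|=\|F\|^{2r}\le\|F\|^{2k}$ since $r\le\min(k,n)$. Note in particular that the exponent is governed by $r$, the size of the contracted leg, not directly by the fact that $a$ has length $k$; your heuristic that the $(Q_k\otimes Q_n)$-invariance of $v(H_l)$ "prevents spurious powers of $\|Q_n\|$ or $\|Q_l\|$" does not capture this mechanism and does not on its own yield the bound. As it stands, the proposal stops at the point where the actual work begins, so the proof of the first (and main) inequality is missing.
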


\begin{proof}
  We follow quite closely the proof of \cite[Theorem~4.9]{Ve07}, taking into
  account the twisting of Hilbert-Schmidt norms.  Starting again
  from~\eqref{eq:ORD1} and taking into account~\eqref{norm} as in the beginning
  of Section~\ref{RDfail} the first inequality will follow if we prove
  \begin{equation}\label{eq:expRD_sufficient}
    \| (A^{k,n}_l)^* (a\otimes b) A^{k,n}_l \|_\HS \le \|F\|^{2k} \|a\otimes b\|_\HS
  \end{equation}
  for any $k$, $l$, $n\in \n_0$ such that $u^l\subset u^k\tp u^n$, $a\in B(H_k)$
  and $b\in B(H_n)$. Since $P_l$ is an orthogonal projection, the left-hand side
  admits
  $\|(\id\otimes t_r^*\otimes \id)(a\otimes b)(\id\otimes t_r\otimes \id)\|_\HS$
  as an evident upper bound, where $r=(k+n-l)/2$ and we are using the twisted
  Hilbert-Schmidt norm on $B(H_{k-r}\otimes H_{n-r})$. (Note that the projection
  $P_l$ commutes with the matrix $Q_k\otimes Q_n$ defining the twisting of the
  Hilbert-Schmidt norm.)

  We decompose $a = \sum_i a_i\otimes E_i$ and
  $b = \sum_i j_r^*E_i j_r\otimes b_i$, where $a_i \in B(H_{k-r})$,
  $b_i \in B(H_{n-r})$ and $(E_i)_i$ is the basis of matrix units in $B(H_r)$
  corresponding to an orthonormal basis of eigenvectors of $Q_r$ in $H_r$. With
  this choice we have in particular that $(E_i)_i$ and $(j_r^* E_i j_r)_i$ are
  orthogonal bases with respect to the twisted Hilbert-Schmidt scalar product on
  $B(H_r)$, and one can moreover compute, if $E_i = e_p e_q^*$ and $e_p$, $e_q$
  are eigenvectors of $Q_r$ with respect to eigenvalues $\lambda_p$,
  $\lambda_q$: $\|E_i\|_\HS^2 = \lambda_q$,
  $\|j_r^*E_i j_r\|_\HS^2 = \lambda_q^{-2}\lambda_p^{-1}$. In particular we note
  that
  \begin{equation}\label{eq:HS_matrix_units}
    \|E_i\|_\HS^{-2} =\lambda_q\lambda_p \|j_r^*E_i j_r\|_\HS^2 \leq \|Q_r\|^2\|j_r^*E_i j_r\|_\HS^2.
  \end{equation}

  According to~\eqref{eq:quadratic} we can then write
  \begin{align*}
    \|(\id\otimes t_r^*\otimes \id)(a\otimes b)(\id\otimes t_r\otimes \id)\|_\HS &= \|\textstyle\sum_{i,j} \Tr(E_j^* E_i) (a_i\otimes b_j)\|_\HS 
                                                                                   = \|\textstyle\sum_i a_i\otimes b_i\|_\HS.
  \end{align*}
  Now we apply the triangle inequality and Cauchy-Schwartz inequality:
  \begin{align*}
    \| (A^{k,n}_l)^* (a\otimes b) A^{k,n}_l \|_\HS^2 &\leq \left(\textstyle\sum_i \|a_i\otimes b_i\|_\HS\right)^2 = \left(\textstyle\sum_i \|a_i\|_\HS \|b_i\|_\HS\right)^2 \\
                                                     &\leq \textstyle \sum_i\|a_i\|_\HS^2 \|E_i\|_\HS^2 \sum_i\|b_i\|_\HS^2 \|E_i\|_\HS^{-2} 
                                                       = \|a\|_\HS^2 \sum_i\|b_i\|_\HS^2 \|E_i\|_\HS^{-2}.
  \end{align*}
  Finally we have
  $\sum_i\|b_i\|_\HS^2 \|E_i\|_\HS^{-2} \leq \|Q_r\|^2 \sum_i\|b_i\|_\HS^2
  \|j_r^*E_i j_r\|_\HS^{2} = \|Q_r\|^2 \|b\|_\HS^2$
  by~\eqref{eq:HS_matrix_units}. Since
  $\|Q_r\| = \|Q_1\|^r = \|F\|^{2r} \leq \|F\|^{2k}$ we have
  proved~\eqref{eq:expRD_sufficient}.

  The second inequality in the statement follows from the first one by a
  standard argument, see \cite[Proposition~3.5]{Ve07}, using the fact that for
  any $n$ the tensor product $u^k\tp u^n$ has at most $k+1$ irreducible
  subobjects.
\end{proof}
    
\begin{remark}
  The Property above can be interpreted as Property $RD_D$ with respect to the
  {\em central} multiplier
  $D = \sum_{k\in\n_0} \|F\|^{2k} p_k = \sum_{k\in\n_0} \|Q_k\| p_k$ and the
  (constant) polynomial $P = C(q)$. Note that the element $C$ in \cite{BhVoZa15}
  satisfies $\sqrt C \leq D$ and thanks to \cite[Proposition~4.2]{BhVoZa15} this
  implies that Property $RD_D$, for this element $D$, is also satisfied by all discrete quantum groups
  of polynomial growth, and still reduces to the usual Property $RD$ for (classical) discrete groups.

 %
%
\end{remark}

\section{Applications: Ultracontractivity and Hypercontractivity of the Heat
  Semigroup on $O_F^+$}
\label{sec_applic}

In this section of the paper, we are interested in studying hypercontractivity
and ultracontractivity properties of the heat semigroup $(T_t)_{t > 0}$ on the
free orthogonal quantum groups $O^+_F$.  This heat semigroup was
introduced and studied in \cite{CiFrKu14,FrHoLeUlZh17} in the Kac-type setting (i.e.,
$F = I_N$), but a standard argument using results from \cite{dCFY, Fr13} on
monoidal equivalences and transference properties of central multipliers allows
one to define an appropriate heat semigroup on all free orthogonal quantum
groups $O^+_F$'s.  The details of this are spelled out, for example, in
\cite[Section 6.1]{Ca18}.

Let $M$ be a von Neumann algebra equipped with a fixed faithful normal state
$\varphi$.  In the following, a {\it $\varphi$-Markov semigroup on $M$} will
mean a $\sigma$-weakly continuous semigroup $(T_t)_{t \ge 0}$ of normal unital
completely positive $\varphi$-preserving maps $T_t: M \to M$.  With a slight
abuse of notation, we will identify $M \subset L_2(M)$ as a dense subspace (via
the GNS map associated to $\varphi$) also denote by $T_t:L_2(M) \to L_2(M)$ the
canonical extension of $T_t$ to a contraction on the GNS space $L_2(M)$.  The
semigroup $(T_t)_{t\ge 0}$ is called {\it ultracontractive} if there exists some
$t_\infty \ge 0$ such that $T_t(L_2(M)) \subset M$ for all $t >t_\infty$. By the closed graph theorem, ultracontractivity is equivalent to that $T_t:L_2(M)\rightarrow M$ is bounded for all $t>t_{\infty}$. We
call $(T_t)_{t \ge 0}$ {\it hypercontractive} if for each $2 < p < \infty$,
there exists a $t_p > 0$ such that for all $t \ge t_p$, we have:
\[
\|T_t\|_{L_2(M) \to L_p(M)} \le 1.
\]
(In the above, we have used the standard fact \cite{JuXu03} that the
contractions $T_t$ admit canonical extensions to contractions
$T_t: L_p(M) \to L_p(M)$ on the associated non-commutative $L_p$-spaces for all
$p \in [1, \infty)$.  We omit the precise details regarding these extensions
here because in the following we only consider hypercontractivity in the tracial
setting.)  In the case of ultracontractive (resp. hypercontractive) semigroups $(T_t)_t$ the {\it
  optimal time} $t^o_\infty$ (resp. $t_p^o$) for ultracontractivity (resp. hypercontractivity) is given by
$t^o_\infty = \inf\{t_\infty\}$ (resp. $t_p^o = \inf\{t_p\}$).

Let us now consider the heat semigroup on $O^+_F$.

\subsection{The heat semigroup on $O^+_F$}
    
Fix $N \in \N$ and $F \in \text{GL}_N(\C)$ with $F \bar F = \pm 1$.  Let
$0 < q <1$ be such that $N_q:=\text{Tr}(F^*F) = q+q^{-1}$, and define
\begin{equation}
  \label{eq:def_lambda}
  \lambda(k) = \lambda_q(k) =\frac{U_k'(N_q)}{U_k(N_q)} \qquad (k \in \N_0),
\end{equation}
where $U_k$ is the $k$-th type-II Chebychev polynomial (defined by $U_0(x) = 1,$
$U_1(x) = x$, and $xU_k(x) = U_{k+1}(x)+ U_{k-1}(x)$). The {\it heat semigroup
  on $O^+_F$} \cite{FrHoLeUlZh17, Ca18} is the $h$-Markov semigroup
$(T_t)_{t \ge 0}$ on $L_\infty(O^+_F)$ given by
\[
T_t \left (u^k_{i,j}\right ) = e^{-t \lambda(k)} u^k_{i,j},
\]
for all $1\leq i,j\leq n_k$, and $k \in \N_0$.
    
Note that we have $\lambda(0) = 0$, $\lambda(1) = 1/N_q$ and moreover from the
estimates in \cite{FrHoLeUlZh17} we have
$\frac{k}{N_q} \le \lambda(k) \le \frac{k}{N_q-2}$ for all $k\in\n$.
    
    
\subsection{Ultracontractivity of the Heat Semigroup on $O^+_F$}

We first consider the ultracontractivity of the heat semigroups.  In the tracial
case, the ultracontractivity of the heat semigroup {\it for all time} (with
$t_\infty = 0$) is well known and follows from standard tracial property RD
estimates.  See \cite[Theorem 2.1]{FrHoLeUlZh17}.  In the case of general
$O^+_F$, we show below that ultracontractivity still holds, but generally not
for all time.
    
\begin{proposition}\label{prop_ultra_non_kac}
  The heat semigroup $(T_t)_{t \ge 0}$ is ultracontractive for every free
  orthogonal quantum group $O^+_F$.  Moreover, if $t_F$ is the optimal time for
  ultracontractivity of the heat semigroup of $O_F^+$ we have
  \begin{equation}
    2(N_q-2)\log \norm{F}\leq t_F\leq 2 N_q\log \norm{F}.
  \end{equation}
\end{proposition}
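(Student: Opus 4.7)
The plan is to combine the weak Rapid Decay estimate of Proposition~\ref{prop_expRD} with the Chebyshev-type bounds $k/N_q \leq \lambda(k) \leq k/(N_q-2)$ on the heat-semigroup eigenvalues, exploiting the fact that $T_t$ acts as the scalar $e^{-t\lambda(k)}$ on each length-$k$ isotypic block $p_k L_2(O_F^+)$.

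For the upper bound, I would first decompose $f \in L_2(O_F^+)$ as $f = \sum_{k\ge 0} p_k f$ and estimate via triangle inequality and the second bound of Proposition~\ref{prop_expRD}:
\[
\|T_t f\|_\infty \leq \sum_{k\ge 0} e^{-t\lambda(k)} \|p_k f\|_\infty \leq C(q) \sum_{k\ge 0} (k+1) \|F\|^{2k} e^{-t\lambda(k)} \|p_k f\|_2.
\]
Cauchy--Schwarz then yields
\[
\|T_t f\|_\infty \leq C(q) \Bigl(\sum_{k\ge 0} (k+1)^2 \|F\|^{4k} e^{-2t\lambda(k)}\Bigr)^{1/2} \|f\|_2.
\]
Inserting $\lambda(k) \geq k/N_q$ turns the remaining sum into a dominated geometric series in $\|F\|^4 e^{-2t/N_q}$, which converges precisely when $t > 2N_q \log\|F\|$. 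This gives $t_F \leq 2N_q \log\|F\|$.

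For the lower bound, suppose $T_t : L_2(O_F^+) \to L_\infty(O_F^+)$ is bounded with norm $C_t$. Since $T_t \mathcal{F}(a) = e^{-t\lambda(k)} \mathcal{F}(a)$ for every $a \in B(H_k)$, and $\lambda(k) \leq k/(N_q-2)$,
\[
\|\mathcal{F}(a)\|_\infty \leq C_t\, e^{tk/(N_q - 2)} \|a\|_2 \qquad (a \in B(H_k)).
\]
To force the bound $t \geq 2(N_q-2)\log\|F\|$, I would produce a sequence of non-zero elements $a_k \in B(H_k)$ satisfying
\[
\|\mathcal{F}(a_k)\|_\infty \geq \frac{c\,\|F\|^{2k}}{\mathrm{poly}(k)} \|a_k\|_2,
\]
saturating the weak RD inequality up to polynomial factors. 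Combining the two inequalities gives $\|F\|^{2k}/\mathrm{poly}(k) \leq (C_t/c)\, e^{tk/(N_q-2)}$, which as $k \to \infty$ forces $2\log\|F\| \leq t/(N_q-2)$, as desired.

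The natural candidates for such test elements are rank-one matrix units $a_k = E_{n,m} \in B(H_k)$ in an eigenbasis of $Q_k$, with $n$ the index of the maximal eigenvalue $\lambda_n = \|F\|^{2k}$; a direct computation gives $\|E_{n,m}\|_2 = \sqrt{d_k}\,\|F\|^k$ and $\mathcal{F}(E_{n,m}) = d_k \|F\|^{2k} u^k_{n,m}$, so \eqref{eq:expRD_sufficient}-type saturation reduces to a lower bound of the shape $\|u^k_{n,m}\|_\infty \geq c\,\|F\|^k/(\sqrt{d_k}\,\mathrm{poly}(k))$ for a suitably chosen column index $m$. The main obstacle is to extract this bound uniformly in $k$ from the unitarity relations $u^k(u^k)^* = I$ together with the inversion symmetry of the eigenvalues of $Q_k$: the row-sum consequence of unitarity only gives the weaker estimate $\max_m \|u^k_{n,m}\|_\infty \geq n_k^{-1/2}$, which is not sharp enough for $N \geq 3$ in view of the asymptotic $\|F\|^2 > 1/(q\,f(N))$ exposed in the proof of Theorem~\ref{thm_RD_fail}, so the analysis must carefully align $m$ with an eigenvector of $Q_k$ of matching eigenvalue.
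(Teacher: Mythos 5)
Your upper bound argument is essentially identical to the paper's: decompose $f=\sum_k f_k$, bound each piece with Proposition~\ref{prop_expRD}, apply Cauchy--Schwarz, and use $\lambda(k)\ge k/N_q$ to get convergence for $t>2N_q\log\|F\|$. That part is fine.

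For the lower bound, however, your proposal has a genuine gap, which you yourself flag at the end: you reduce everything to finding a sequence $a_k\in B(H_k)$ with $\|\mathcal F(a_k)\|_\infty\gtrsim \|F\|^{2k}\|a_k\|_2/\mathrm{poly}(k)$, which in turn requires a lower bound of the type $\|u^k_{m,n}\|_\infty\gtrsim \|F\|^k/(\sqrt{d_k}\,\mathrm{poly}(k))$, and you give no argument for this. In fact, no such saturation of the RD inequality is needed. The paper's proof avoids any nontrivial lower bound on $L_\infty$-norms entirely by using a much cheaper trick: since the $*$-operation is an isometry on $L_\infty(O_F^+)$, one has $\|u^k_{i,j}\|_\infty=\|(u^k_{i,j})^*\|_\infty\geq \|(u^k_{i,j})^*\|_2$, and in the non-Kac case the $L_2$-norm is \emph{not} symmetric under the adjoint. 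From the assumed bound $\|T_t(u^k_{i,j})\|_\infty\leq K\|u^k_{i,j}\|_2$ this gives
\[
e^{-t\lambda(k)}\,\|(u^k_{i,j})^*\|_2\;\leq\;K\,\|u^k_{i,j}\|_2,
\]
and the orthogonality relations~\eqref{eq:ortho_relations} compute both sides explicitly: in a basis diagonalizing $Q_k$ one has $\|u^k_{i,j}\|_2^2=d_k^{-1}(Q_k^{-1})_{ii}$ and $\|(u^k_{i,j})^*\|_2^2=d_k^{-1}(Q_k)_{jj}$, so choosing $i,j$ to maximize the ratio yields $\|F\|^{2k}\leq Ke^{t\lambda(k)}\leq Ke^{tk/(N_q-2)}$, and letting $k\to\infty$ gives $t\geq 2(N_q-2)\log\|F\|$. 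You should replace your test-element construction with this argument: the relevant quantity is the ratio of the two $L_2$-norms of $u^k_{i,j}$ and its adjoint, not the $L_\infty$-norm of $u^k_{i,j}$ itself.
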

    
\begin{proof}
  First of all, let us suppose that $t>2 N_q \log \norm{F}$ and take
  $f \in L_2(O_F^+)$.  Then we can write $f = \sum_{k\geq 0} f_k$, with
  $f_k \in \mathrm{span}\left \{u^k_{i,j}:1\leq i,j\leq n_k\right\}$.  Using the
  exponential form of Property RD given by Proposition \ref{prop_expRD}, we then
  have
  \begin{align*}
    \|T_t (f)\|_\infty &\le \sum_{k\geq 0} e^{-\lambda(k)t}\|f_k \|_\infty \\
                       & \le \sum_{k\geq 0} e^{-\lambda(k)t}C(q)(k+1)\|F\|^{2k}\|f_k\|_2 \\
                       &\le C(q) \Big(\sum_{k\geq 0} e^{-2\lambda(k)t}(k+1)^2\|F\|^{4k}\Big)^{1/2}\|f\|_2.
  \end{align*}
  Hence the conclusion follows if
  \[e^{-2\lambda(k)t}\norm{F}^{4k}\leq e^{-Mk}\]
  for all $k\in \n_0$ by a universal constant $M>0$. Indeed, let
  $M= \frac{2}{N_q}t-4\log \norm{F}>0$. Then
  \[\inf_{k\in \n_0} \left \{ \frac{\lambda(k)}{k}t-2\log \norm{F}
  \right\}\geq\frac{1}{N_q}t-2\log \norm{F}=\frac{M}{2}>0,\]
  which completes the proof.
    
  To prove the stated lower bound, let us assume that
  $\norm{T_t(f)}_{L_{\infty}( O_F^+)}\leq K \norm{f}_{L_2(O_F^+)}$ for a
  universal constant $K>0$.  Then for any $k\geq 0$ and $1\leq i,j\leq n_k$ we
  have
  \begin{align*}
    e^{-t\lambda(k)}\norm{(u^k_{i,j})^*}_{L_2(O_F^+)}&\leq e^{-t\lambda(k)}\norm{(u^k_{i,j})^*}_{L_{\infty}(O_F^+)}\\
                                                     &=e^{-t\lambda(k)}\norm{u^k_{i,j} }_{L_{\infty}(O_F^+)}\leq K\norm{u^k_{i,j}}_{L_2(O_F^+)}.
  \end{align*}
  On the other hand, using~\eqref{eq:ortho_relations} it is easy to compute
  $\|u^k_{i,j}\|^2_{L_2(O_F^+)} = d_k^{-1} (Q_k^{-1})_{ii}$ and
  $\|(u^{k}_{i,j})^*\|^2_{L_2(O_F^+)} = d_k^{-1} (Q_k)_{jj}$.  Therefore, using a
  basis for $H_k$ in which $Q_k$ is diagonal we obtain
  \[\norm{F}^{2k}=\sqrt{\norm{Q_k}\norm{Q_k}}=\max_{i,j}\frac{\norm{(u^k_{i,j})^*}_{L_2(O_F^+)}}{\norm{u^k_{i,j}
    }_{L_2(O_F^+)}}\leq K\cdot e^{t\lambda(k)}\leq K\cdot
  e^{\frac{tk}{N_q-2}},\]
  which implies $t\geq 2(N_q-2)\log \norm{F} - \frac{(N_q-2)\log(K)}{k}$ for all
  $k$. Then, taking the limit $k\rightarrow \infty$ gives the desired
  conclusion.
\end{proof}

\begin{remark}
  A closer examination of the above proof actually shows that
  $T_t(L_2(O^+_F)) \subset C_r(O^+_F)$ for all $t > t_F$.  I.e., the heat
  semigroup on $O^+_F$ has some additional ``smoothing'' properties beyond what
  is guaranteed by ultracontractivity.
\end{remark}

\begin{remark}
  Of course, it is natural to wonder if hypercontractivity holds for the heat
  semigroups of {\it all} free orthogonal quantum groups $O^+_F$. Actually we
  can show that hypercontractivity is always obtained, although at this time we
  have no clue for optimal estimates for the time to contraction.
\end{remark}
   
\begin{proposition}\label{prop_hyper_non_kac}
  Let $2 < p < \infty$.  For sufficiently large $t$ (depending on $p$),
  $T_t:L_2(O_F^+) \to L_p(O_F^+)$ is a contraction.
\end{proposition}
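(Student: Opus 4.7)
My plan is to derive the hypercontractive contraction $\|T_t\|_{L_2 \to L_p} \le 1$ from Proposition~\ref{prop_ultra_non_kac} by combining it with the $L_2$-spectral gap of the generator and a complex interpolation argument restricted to the mean-zero subspace. First I would decompose $T_t = E_0 + T_t^\perp$, where $E_0 f = h(f)\cdot 1$ is the rank-one projection onto constants and $T_t^\perp = T_t(I - E_0)$. Rerunning the proof of Proposition~\ref{prop_ultra_non_kac} while dropping the $k = 0$ term gives
\[
  \|T_t^\perp\|_{L_2 \to L_\infty} \le C(q)\Bigl(\sum_{k \ge 1} e^{-2t\lambda(k)}(k+1)^2 \|F\|^{4k}\Bigr)^{1/2},
\]
which tends to $0$ as $t \to \infty$ because $\lambda(k) \ge k/N_q$. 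Combined with the spectral-gap bound $\|T_t^\perp\|_{L_2 \to L_2} \le e^{-t\lambda(1)} = e^{-t/N_q}$, Riesz--Thorin interpolation on the mean-zero subspace gives $\|T_t^\perp\|_{L_2 \to L_p} \to 0$ as $t \to \infty$ for every $p \in [2, \infty)$.

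Given this vanishing, I would deduce hypercontractivity as follows. For $f \in L_2(O_F^+)$, write $f = h(f)\cdot 1 + g$ with $g$ mean-zero; then $T_t f = h(f)\cdot 1 + T_t g$ with $T_t g = T_t^\perp g$. For $t$ large the previous step makes $\|T_t g\|_p$ as small as we like compared to $\|g\|_2$, while $|h(f)| \le \|f\|_2$. A sharp combination of these bounds, exploiting the cancellation $h(T_t g) = 0$ and the scalar nature of $h(f)\cdot 1$, should give $\|T_t f\|_p \le \|f\|_2$.

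The main obstacle is precisely this last step, since the naive triangle inequality $\|h(f)\cdot 1 + T_t g\|_p \le |h(f)| + \|T_t g\|_p$ only delivers $\|T_t\|_{L_2 \to L_p} \le 1 + o(1)$, not $\le 1$. In the tracial (Kac) case, one closes this gap by expanding $\|h(f) + T_t g\|_{2k}^{2k}$ for even $p = 2k$ via the trace identity, using the mean-zero property together with a noncommutative Khintchine-type estimate in the spirit of Theorem~\ref{thm:Khintchine}, and then interpolating by Riesz--Thorin to cover all $p \in [2, 2k]$. In the non-Kac case, I would carry this out in the Haagerup $L_p$-formalism: because $T_t$ commutes with the modular automorphism group of $h$ (being a central multiplier on $L_\infty(O_F^+)$), it extends naturally to a semigroup on the tracial crossed product carrying the Haagerup $L_p$-structure, where the Kac-type expansion applies. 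Establishing the sharp estimate rigorously in this non-tracial setting is where the principal technical work lies.
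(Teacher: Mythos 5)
Your proposal correctly identifies the central difficulty and gets quite far, but it does not close the argument: as you yourself say at the end, the sharp step is left as ``the principal technical work.'' The decomposition $T_t = E_0 + T_t^\perp$, the estimate $\|T_t^\perp\|_{L_2 \to L_\infty} \to 0$ from Proposition~\ref{prop_ultra_non_kac}, and the observation that the triangle inequality only gives $\|T_t\|_{L_2\to L_p} \le 1 + o(1)$ are all right and are genuinely the crux. However, the route you then sketch --- expanding $\|h(f)1 + T_t g\|_{2k}^{2k}$ for even $p$, invoking a Khintchine estimate, interpolating, and finally porting everything to the Haagerup $L_p$ formalism in the non-tracial case --- is speculative and would require substantial work that is not carried out. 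In particular the paper does \emph{not} use an even-$p$ moment expansion even in the Kac case (Theorems~\ref{thm1}--\ref{thm3}), so you would be inventing a new argument whose feasibility is unclear, especially in the type~III setting.

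The paper closes the gap in a single stroke using the noncommutative martingale convexity inequality of Ricard--Xu \cite[Theorem~1]{RiXu16}: for $p \ge 2$ and the $h$-preserving conditional expectation $E_0(x) = h(x)1$, one has
\begin{equation*}
\|T_t f\|_p^2 \;\le\; \|E_0(T_t f)\|_p^2 + (p-1)\,\|T_t f - E_0(T_t f)\|_p^2 \;=\; |h(f)|^2 + (p-1)\,\|T_t^\perp f\|_p^2.
\end{equation*}
The factor $(p-1)$ here is exactly what lets one convert your $o(1)$ bound on $\|T_t^\perp\|_{L_2\to L_p}$ into a genuine contraction: the mean-zero part is then controlled by $\|\cdot\|_p \le \|\cdot\|_\infty$ and Proposition~\ref{prop_expRD} (so no Riesz--Thorin interpolation on the mean-zero subspace is even needed), and it suffices to take $t$ large enough that $(p-1)\sum_{n\ge 1}e^{-2\lambda(n)t}C(q)^2(n+1)^2\|F\|^{4n} \le 1$, whence $\|T_t f\|_p^2 \le |h(f)|^2 + \|f - h(f)1\|_2^2 = \|f\|_2^2$. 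So your proof has a real gap precisely where you flag it; the missing ingredient is the Ricard--Xu inequality, which replaces the sharp combination you were searching for and already applies in the non-tracial setting, making the Haagerup $L_p$ detour unnecessary.
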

    
\begin{proof}
  For any $f \in L_2(O_F^+)$, we have from \cite[Theorem 1]{RiXu16},
  \begin{align*}
    \|T_t (f)\|_p^2 &\le \|h(T_t(f))1\|_p^2 + (p-1)\|T_t(f )  - h (T_t (f))\|_p^2 \\
                    &\le|h( f ) |^2 +(p-1) \Big(\sum_{n \ge 1} e^{-\lambda(n)t}\|f_n\|_p \Big)^2\\
                    &\le|h(f )|^2 + (p-1)\Big(\sum_{n \ge 1} e^{-\lambda(n)t}\|f_n\|_\infty \Big)^2\\
                    & \le|h(f )|^2 +(p-1) \Big(\sum_{n \ge 1} e^{-\lambda(n)t}C(q)(n+1)\|F\|^{2n}\| f_n \|_2 \Big)^2\\
                    & \le |h(f )|^2 + (p-1)\Big(\sum_{n \ge 1} e^{-2\lambda(n)t}C(q)^2(n+1)^2\|F\|^{4n}\Big)\|f - h(f)1\|_2^2 \le \|f \|_2^2,
  \end{align*}
  for all $t$ large enough so that
  \begin{displaymath}\label{prelim-bound}
    \sum_{n \ge 1} (p-1)e^{-2\lambda(n)t}C(q)^2(n+1)^2\|F\|^{4n} \le 1.
  \end{displaymath}
\end{proof}

\subsection{Improved Hypercontractivity Results for $O_N^+$}

For the remainder of the paper we turn our attention to the Kac setting and
consider $O^+_N$.  Our aim is to revisit the hypercontractivity results of
\cite{FrHoLeUlZh17}, and obtain some improved estimates (from above and below)
on the optimal time to contraction for the heat semigroup $(T_t)_{t \ge 0}$.  In
the following we let $t_{N,p}$ be the optimal time for $L_2 \to L_p$
hypercontractivity of the heat semigroup on $O_N^+$.

We begin with a necessary lower bound for $t_{N,p}$.

\begin{lemma}\label{lem-necessary}
  For each $N \ge 2$ and $2 <p < \infty$, we have
  \[t_{N,p}\geq \frac{N} {2}\log(p-1).\]
\end{lemma}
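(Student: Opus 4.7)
The approach is the noncommutative Rothaus linearization: test the hypercontractivity inequality $\|T_t(f)\|_p \le \|f\|_2$ on a one-parameter family $f_\varepsilon = 1 + \varepsilon g$, where $g$ is a self-adjoint, mean-zero eigenfunction of the generator of $(T_t)_{t\ge 0}$ with eigenvalue equal to the spectral gap $\lambda(1) = 1/N_q = 1/N$, and extract the leading constraint as $\varepsilon \to 0$.

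For $O_N^+$ one has $F = I_N$, so the defining relation $u = (F\otimes 1)u^c(F^{-1}\otimes 1)$ forces $u_{ij}^* = u_{ij}$. Take $g := u_{11}$: then $g$ is self-adjoint and bounded, $h(g) = 0$ by the orthogonality relations, and $g$ is a coefficient of the fundamental (length-$1$) representation, so $T_t(g) = e^{-t/N}g$. The element $f_\varepsilon := 1 + \varepsilon g$ is self-adjoint and strictly positive for all $|\varepsilon| < \|g\|_\infty^{-1}$.

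On any compact interval in $(0,\infty)$ containing the spectra of the $f_\varepsilon$ for small $\varepsilon$, the Taylor expansion $x^p = 1 + p(x-1) + \binom{p}{2}(x-1)^2 + O((x-1)^3)$ holds uniformly, so the bounded functional calculus yields
\[
\|f_\varepsilon\|_p^p \;=\; h(f_\varepsilon^p) \;=\; 1 + \tbinom{p}{2}\varepsilon^2\|g\|_2^2 + O(\varepsilon^3),
\]
and hence $\|f_\varepsilon\|_p = 1 + \tfrac{p-1}{2}\varepsilon^2\|g\|_2^2 + O(\varepsilon^3)$. A direct computation gives $\|f_\varepsilon\|_2 = (1 + \varepsilon^2\|g\|_2^2)^{1/2} = 1 + \tfrac{1}{2}\varepsilon^2\|g\|_2^2 + O(\varepsilon^4)$, and the same expansion applied to $T_t(f_\varepsilon) = 1 + \varepsilon e^{-t/N}g$ gives $\|T_t(f_\varepsilon)\|_p = 1 + \tfrac{p-1}{2}e^{-2t/N}\varepsilon^2\|g\|_2^2 + O(\varepsilon^3)$.

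If $\|T_t\|_{L_2(O_N^+) \to L_p(O_N^+)} \le 1$, then in particular $\|T_t(f_\varepsilon)\|_p \le \|f_\varepsilon\|_2$ for all small $\varepsilon > 0$; comparing the $\varepsilon^2$-coefficients and letting $\varepsilon \to 0^+$ forces $(p-1)e^{-2t/N} \le 1$, i.e. $t \ge \frac{N}{2}\log(p-1)$. By definition of the optimal time this gives $t_{N,p} \ge \frac{N}{2}\log(p-1)$. The only substantive point, otherwise routine, is controlling $h(f_\varepsilon^p)$ via the noncommutative bounded functional calculus for self-adjoint elements with spectra in a compact neighborhood of $1$; this reduces to a uniform Taylor estimate for the scalar function $x \mapsto x^p$.
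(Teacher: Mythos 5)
Your proof is correct, and it uses the same underlying idea as the paper, namely Rothaus-type linearization of the hypercontractivity inequality around the constant function $1$ using a self-adjoint, mean-zero, length-$1$ test element. The paper takes $g = \chi_1 = \sum_{i} u_{ii}$, the fundamental character, and exploits Banica's result that $\chi_1$ is semicircular on $[-2,2]$ to turn $h\bigl((1+a e^{-t/N}\chi_1)^p\bigr)$ into an explicit classical integral against the semicircular density, whose second-order Taylor coefficient in $a$ is then computed directly. You instead take $g = u_{11}$ and obtain the second-order expansion of $\|1+\varepsilon g\|_p^p = h\bigl((1+\varepsilon g)^p\bigr)$ abstractly, via the bounded functional calculus and the uniform Taylor estimate for $x\mapsto x^p$ on a compact neighborhood of $1$. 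Both yield the identical constraint $(p-1)e^{-2t/N}\le 1$. Your version is slightly more flexible in that it never needs the explicit distribution of $g$ — any bounded self-adjoint mean-zero eigenvector at the spectral gap would do — whereas the paper's choice is more computationally transparent because of the semicircular law. One small point worth flagging: as in the paper, the step from ``contractivity for $t>t_{N,p}$'' to the inequality at $t=t_{N,p}$ implicitly uses continuity in $t$, but this is harmless and handled the same way in both arguments.
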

\begin{proof}
  Let $\chi_1$ denote the character of the fundamental representation of
  $O^+_N$.  With $f_a= 1+a\chi_1\in L_2(O_N^+)$ and sufficiently small $a>0$, we have
  \begin{align*}
(1+a^2)^{\frac{p}{2}} =\norm{f_a}_{L_2(O_N^+)}^p&\geq \norm{T_{t_{N,p}}(f_a)}_{L_p(O_N^+)}^p\\
=&\frac{1}{2\pi}\int_{-2}^{2} (1+ae^{-\frac{t_{N,p}}{N}}x)^p \sqrt{4-x^2}dx\\
=&\frac{2}{\pi} \int_{-\frac{\pi}{2}}^{\frac{\pi}{2}}(1+2a e^{-\frac{t_{N,p}}{N}} \sin(\theta))^p \cos^2(\theta) d\theta .
  \end{align*}
Then, using Taylor expansion up to second order, we can obtain
\[\frac{p}{2}=\lim_{a\searrow 0}\frac{(1+a^2)^{\frac{p}{2}}-1}{a^2}\geq \lim_{a\searrow 0} \frac{\frac{2}{\pi} \int_{-\frac{\pi}{2}}^{\frac{\pi}{2}}(1+2a e^{-
      \frac{t_{N,p}}{N}} \sin(\theta))^p \cos^2(\theta) d\theta -1}{a^2}=\frac{p(p-1)e^{-\frac{2t_{N,p}}{N}}}{2}.\]
Equivalently, we have  $\displaystyle t_{N,p}\geq \frac{N} {2}\log(p-1)$.

\end{proof}

\subsubsection{Khintchine Inequalities for $L_p(O_N^+)$}
    
Our next goal is to establish upper bounds for the optimal time to contraction
$t_{N,p}$.  To do this, we follow along the lines of \cite[Lemma 7]{RiXu16},
establishing and then exploiting a certain non-commutative Khintchine type
inequality over $O_N^+$.  More precisely, we are interested in finding the
optimal constants $K_{m,p}>1$ such that for all $m \in \N$, $p > 2$, and
$f \in \mathcal{O}(\g)$ of length $m$ we have
\begin{displaymath} \label{Khintchine} \|f\|_{L_p(O_N^+)} \leq K_{m,p}
  \|f\|_{L_2(O_N^+)}.
\end{displaymath}

\begin{theorem} \label{thm:Khintchine} For $O_N^+$ we have the following
  estimates for $K_{m,p}$:
  \begin{enumerate}
  \item $\displaystyle K_{m,p} \le (C(q)^2(m+1))^{1- \frac{3}{p}}$
    $(4 < p \le \infty)$,
  \item $\displaystyle K_{m,p} \le (C(q)^2(m+1))^{\frac{1}{2}- \frac{1}{p}}$
    $(2 \le p \le 4)$.
  \end{enumerate}
\end{theorem}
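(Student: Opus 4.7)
The plan is to follow the strategy of \cite{RiXu16}, establishing the bound at the single exponent $p=4$ by direct calculation and then obtaining the full range by log-convexity of the $L_p$ norms together with the $L_\infty$ estimate from Proposition \ref{prop_expRD}. Concretely, once the endpoints $K_{m,2}\le 1$, $K_{m,4}\le (C(q)^2(m+1))^{1/4}$ and $K_{m,\infty}\le C(q)(m+1)$ are in hand, one interpolates between $L_2$ and $L_4$ for $2\le p\le 4$ and between $L_4$ and $L_\infty$ for $4\le p\le \infty$.

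The key estimate is $\|f\|_4\le (C(q)^2(m+1))^{1/4}\|f\|_2$ for a length-$m$ element $f=\mathcal{F}(a)$ with $a\in p_m c_{00}(\widehat{O_N^+})$. Since $O_N^+$ is of Kac type the Haar state is tracial, hence $\|f\|_4^4=\|ff^*\|_2^2$. In $O_N^+$ the tensor product $u^m\tp u^m$ decomposes as $u^0\oplus u^2\oplus\cdots\oplus u^{2m}$, so $ff^*$ admits exactly $m+1$ orthogonal length-components and
\[
\|ff^*\|_2^2=\sum_{k=0}^{m}\|(ff^*)_{2k}\|_2^2.
\]
To bound each summand I would write $p_{2k}(ff^*)\xi_0=(p_{2k}\mathcal{F}(a)p_m)(f^*\xi_0)$, using the compatibility $p_n\mathcal{F}(a)\xi_0=\mathcal{F}(p_na)\xi_0$ and the fact that $f^*$ is again of length $m$ (since $u^m$ is self-conjugate). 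The operator form of Property RD, namely the first inequality in Proposition \ref{prop_expRD} with $F=I_N$ so that $\|F\|^{2k}=1$, yields $\|p_{2k}\mathcal{F}(a)p_m\|\le C(q)\|a\|_2$; combined with $\|f^*\|_2=\|f\|_2$ (traciality), this gives $\|(ff^*)_{2k}\|_2\le C(q)\|f\|_2^2$, and summing the $m+1$ terms produces the desired $L_4$ bound.

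For the interpolation step, the second inequality of Proposition \ref{prop_expRD} with $\|F\|=1$ directly provides $K_{m,\infty}\le C(q)(m+1)$. Since $p\mapsto \log\|f\|_p$ is convex in $1/p$ in the tracial setting (Lyapunov's inequality), interpolation between the endpoints is elementary. For $2\le p\le 4$, with $\theta=2-4/p$, one computes $1^{1-\theta}(C(q)^2(m+1))^{\theta/4}=(C(q)^2(m+1))^{1/2-1/p}$. For $4\le p\le \infty$, with $\theta=1-4/p$, one obtains $(C(q)^2(m+1))^{(1-\theta)/4}(C(q)(m+1))^{\theta}=C(q)^{1-2/p}(m+1)^{1-3/p}$, which must be checked to be dominated by $(C(q)^2(m+1))^{1-3/p}$; this reduces to $1-2/p\le 2-6/p$, which holds precisely for $p\ge 4$.

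The main obstacle is the $L_4$ estimate, and specifically recognizing that (i) traciality collapses $\|f\|_4^4$ to $\|ff^*\|_2^2$; (ii) in $O_N^+$ the product $ff^*$ has only $m+1$ distinct length components, which provides the correct counting factor; and (iii) each such component is controlled by the operator-norm block form of Property RD rather than by the cruder scalar form. Once these three points are identified, everything reduces to log-convex interpolation and an elementary sanity check on the constant $C(q)$.
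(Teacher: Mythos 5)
Your proposal is correct and arrives at exactly the bounds in the statement (including the mildly lossy constant $C(q)^{2-6/p}$ for $p>4$), and it follows the same overall architecture as the paper's proof: establish the endpoints $K_{m,4}$ and $K_{m,\infty}$ and then interpolate. The paper likewise obtains $\|f\|_4^4 \le C(q)^2(m+1)\|f\|_2^4$ and $K_{m,\infty}\le C(q)(m+1)$ and leaves the interpolation to the reader; you have made it explicit, which is fine.

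The one genuine (minor) variation is in the $L_4$ endpoint. The paper introduces the $\sharp$-involution $a^\sharp=J_m^{-1}\bar a J_m$ on $B(H_m)$, writes out the full Peter--Weyl decomposition $f^*f=\sum_{s=0}^{m}\sum_{i,j}[(v_{2s}^{m,m})^*(a^\sharp\otimes a)v_{2s}^{m,m}]_{j,i}u^{2s}_{i,j}$, and bounds each block Hilbert--Schmidt norm directly via the intertwiner estimate $\| (v_l^{m,n})^*(y\otimes z)v_l^{m,n}\|_{HS}\le C(q)(d_l/(d_md_n))^{1/2}\|y\|_{HS}\|z\|_{HS}$. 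You instead factor $p_{2k}(ff^*)\xi_0=(p_{2k}\mathcal F(a)p_m)(f^*\xi_0)$ and invoke the operator-norm form of the already-established Proposition~\ref{prop_expRD} (with $F=I_N$ so $\|F\|^{2m}=1$), giving $\|(ff^*)_{2k}\|_2\le C(q)\|a\|_2\|f^*\|_2=C(q)\|f\|_2^2$ for each of the $m+1$ even-length components. This bypasses the $\sharp$-structure entirely and is a clean reuse of the RD block estimate; of course Proposition~\ref{prop_expRD} is itself proved by the same intertwiner analysis, so the two routes are ultimately equivalent. The only small things worth saying aloud in a write-up are that $f^*$ has length $m$ because $u^m$ is self-conjugate, and that $\|f^*\|_2=\|f\|_2$ by traciality in the Kac case; you flagged both.
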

    

\begin{proof}
  For any admissible triple $(l,m,n)\in \n_0^3$ let
  $v_l^{m,n} = \|A_l^{m,n}\|^{-1}A_l^{m,n} \in \Mor_{O^+_N}(H_l, H_m \otimes
  H_n)$
  be the isometric intertwiner considered in Section \ref{RDfail}.  If we repeat
  the usual RD-type calculations for $O_N^+$ (e.g. \cite[Section 4]{Ve07} or
  \cite[Section 5]{BrRu17}), one obtains the following general inequality for
  the (untwisted) Hilbert-Schmidt norms
  \begin{align*}
    \Big\| (v_l^{m,n})^* (y \otimes z) v_{l}^{m,n}\Big\|_{HS} \le  \|A_{l}^{m,n}\|^{-2}\|y\|_{HS}\|z\|_{HS} \le C(q) \Big(\frac{d_l}{d_md_n}\Big)^{1/2}\|y\|_{HS}\|z\|_{HS}.
  \end{align*}
  for any $y \in B(H_m)$, $z \in B(H_n)$.  Note that the second inequality above
  follows from \eqref{norm}.

  We now consider the case $p = \infty$.  In this case, we note that the above
  inequality is exactly the required estimate \eqref{eq:ORD1} for Property RD to
  hold: It says that $\|p_lfp_n\| \le C(q) \|f\|_2$ for each
  $f \in \text{span}\left \{u^m_{i,j}:1\leq i,j\leq n_m\right\}$.  This implies
  that $\|f\|_{C_r(O_N^+)} \le C(q)(m+1)\|f\|_2$ for all $m \in \N_0$ and all
  $f\in \text{span}\left \{u^m_{i,j}:1\leq i,j\leq n_m\right\}$.  I.e., we have
  $K_{m,\infty} \le C(q)(m+1) \le C(q)^2(m+1)$.

  Next, we consider $p=4$.  Now, we define an involution structure $\sharp$ on
  $ B(H_m)$ by $a^{\sharp}=J_m^{-1} \overline{a} J_m$ for all $a\in B(H_m)$,
  where $J_m$ is the unique anti-unitary satisfying
  $(u^m)^c=(J_m\otimes 1)u^m (J_m^{-1}\otimes 1)$. Then, for any
  $f= \sum_{i,j=1}^{n_m}a_{j,i}u^m_{i,j}\in C_r(O_N^+)$, we have
  \[f^*f=\sum_{s=0}^m \sum_{i,j=1}^{n_m} [(v^{m,m}_{2s})^*(a^{\sharp}\otimes
  a)v^{m,m}_{2s}]_{j,i}u^{2s}_{i,j}.\]
  
  Thus
  \begin{align*}
    \|f\|_{L_4(O_N^+)}^4  &= \|f^*f\|_{L_2(O_N^+)}^2= \sum_{s=0}^m \frac{1}{d_{2s}}\Big\|(v_{2s}^{m,m})^*(a^\sharp \otimes a) v_{2s}^{m,m}\Big\|_{HS}^2 \\
                          &\le \sum_{s=0}^m \frac{C(q)^2d_{2s}}{d_{2s} d_{m}^2}\|a^\sharp\|_{HS}^2\|a\|_{HS}^2 \\
                          &= C(q)^2\frac{\|a\|_{HS}^4}{d_m^2} (m+1) \\
                          &= C(q)^2(m+1)\|f\|_2^4.  
  \end{align*}
  Thus $K_{m,4} \le \sqrt{C(q)}(m+1)^{1/4}$.  The rest of the proof now follows
  from complex interpolation theorem and our estimates for
  $K_{m,\infty}, K_{m,4}$.
\end{proof}
    
\begin{remark}
  The above bound for $K_{m,4}$ is essentially optimal, since
  $\|\chi_m\|_4 =(m+1)^{1/4}$ (the $4$th moment of the $m$th type II Chebychev
  polynomial).
\end{remark}

\subsubsection{Applications to Improved Optimal Time Estimates.}

\begin{theorem}\label{thm1}
  Let $p\geq 4$ and $c_p=1+\displaystyle \frac{4}{\log(p-1)}$. Then we have
  \[t_{N,p}\leq \frac{c_pN}{2}\log(p-1)+\left(1-\frac{3}{p}\right)\cdot
  2N\log(C(q)). \]
\end{theorem}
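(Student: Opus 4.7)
The plan is to combine the noncommutative Khintchine inequality of Theorem~\ref{thm:Khintchine}(1) with the Rosenthal-type decomposition from \cite[Theorem~1]{RiXu16} already exploited in the proof of Proposition~\ref{prop_hyper_non_kac}. Writing $f = h(f)\cdot 1 + \sum_{n\geq 1} f_n \in L_2(O_N^+)$ with each $f_n$ of length $n$, I would begin from
\[\|T_t f\|_p^2 \leq |h(f)|^2 + (p-1)\,\|T_t f - h(f)\cdot 1\|_p^2,\]
apply the triangle inequality to $T_t f - h(f)\cdot 1 = \sum_{n\geq 1} e^{-\lambda(n)t}f_n$, substitute the Khintchine bound $\|f_n\|_p \leq (C(q)^2(n+1))^{1-3/p}\|f_n\|_2$ from Theorem~\ref{thm:Khintchine}(1), and close with the Cauchy--Schwarz inequality. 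Since $\|f-h(f)\cdot 1\|_2^2 = \sum_{n\geq 1}\|f_n\|_2^2$, the desired contraction $\|T_t f\|_p \leq \|f\|_2$ reduces to the scalar condition
\[(p-1)\,C(q)^{4(1-3/p)}\sum_{n\geq 1}(n+1)^{2(1-3/p)}e^{-2\lambda(n)t}\leq 1.\]

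Next, I would insert the eigenvalue estimate $\lambda(n) \geq n/N_q = n/N$ (using $N_q = N$ in the Kac case) and plug in the candidate time $t = \frac{c_p N}{2}\log(p-1) + (1-3/p)\cdot 2N\log C(q)$. This choice makes $e^{-2t/N} = (p-1)^{-c_p}C(q)^{-4(1-3/p)}$, and the design identity $c_p\log(p-1) = \log(p-1)+4$ yields $(p-1)^{1-c_p} = e^{-4}$. After substitution, each summand takes the form
\[(n+1)^{2(1-3/p)}\,(p-1)^{1-n}\,e^{-4n}\,C(q)^{4(1-3/p)(1-n)},\]
which for every $n\geq 1$ is dominated by $(n+1)^2 e^{-4n}$, since $p-1\geq 1$, $C(q)\geq 1$, $1-n\leq 0$, and $2(1-3/p)\leq 2$ for $p\geq 4$.

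It then remains to verify the elementary estimate $\sum_{n\geq 1}(n+1)^2 e^{-4n}\leq 1$, which follows by closed-form summation of the geometric series and its first two derivatives (the sum evaluates to roughly $0.08$). The main design choice, rather than a real obstacle, is the calibration of $c_p$: the additive correction $4/\log(p-1)$ is engineered precisely so that $(p-1)^{1-c_p}=e^{-4}$, providing just enough extra exponential decay in $n$ to absorb both the Khintchine polynomial $(n+1)^2$ and the outside factor $p-1$ uniformly in $p\geq 4$.
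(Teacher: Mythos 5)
Your proposal is correct, and it shares the same overall strategy as the paper: start from the martingale convexity inequality of Ricard--Xu, feed in the Khintchine bound from Theorem~\ref{thm:Khintchine}(1), apply Cauchy--Schwarz, and reduce the contractivity claim to the single scalar condition
\[
(p-1)\,C(q)^{4(1-3/p)}\sum_{n\geq 1}(n+1)^{2(1-3/p)}e^{-2\lambda(n)t}\leq 1,
\]
then insert $\lambda(n)\geq n/N$ and the stated candidate time. Where the two proofs part ways is in the final verification of this sum. The paper deliberately looks for the smallest exponent $c$ for which the sum drops below $1$: it bounds the sum by a rational function of $(p-1)^{-c}$, changes variables, verifies a cubic polynomial inequality at $t=1+\tfrac{1}{24(p-1)}$, and concludes that some $c\approx 1+\log(24)/\log(p-1)$ already works, after which monotonicity in $c$ gives the claim for $c_p$. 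You instead plug in $c_p$ directly, notice that the design identity $(p-1)^{1-c_p}=e^{-4}$ turns each summand into a quantity dominated by $(n+1)^2e^{-4n}$, and evaluate $\sum_{n\geq 1}(n+1)^2e^{-4n}\approx 0.076<1$ in closed form. Your version is shorter and more transparent for the stated theorem; the paper's route has the (unstated in the theorem) benefit of exhibiting a somewhat smaller admissible $c$, which mirrors the machinery reused in Theorems~\ref{thm2} and~\ref{thm3}. Both are complete proofs of the statement.
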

\begin{proof}
  By the non-commutative martingale convexity inequality of \cite[Theorem
  1]{RiXu16} and our Theorem \ref{thm:Khintchine}, we have \small
  \begin{align*}
    \norm{T_t(f)}_{L_p(O_N^+)}^2&\leq h(f)^2+(p-1)\norm{\sum_{k\geq 1}e^{-t\lambda(k)}f_k}_{L_p(O_N^+)}^2\\
                                &\leq h(f)^2+(p-1) \left (\sum_{k\geq 1}e^{-t\lambda(k)}C(q)^{2(1-\frac{3}{p})} (1+k)^{1-\frac{3}{p}}\norm{f_k}_{L_2(O_N^+)} \right )^{2}\\
                                &\leq h(f)^2+ (p-1)\left (\sum_{k\geq 1}e^{-2t\lambda(k)}C(q)^{4(1-\frac{3}{p})} (1+k)^{2-\frac{6}{p}}\right )\left ( \sum_{k\geq 1}\norm{f_k}
                                  _{L_2(O_N^+)}
                                  ^2\right )
  \end{align*}
  \normalsize for any $f\in L_p(O_N^+)$ and $p\geq 4$.
    
  Note that, for any $c\geq 1$, the assumption
  $t\geq \frac{c N}{2}\log(p-1)+2N (1-\frac{3}{p})\log(C(q) )$ implies \small
  \begin{align*}
    t&\geq \frac{c N}{2}\log(p-1)+\frac{2N}{k}\left (1-\frac{3}{p}\right )\log(C(q)) \\ &\geq \frac{c k }{2\lambda(k)}\log(p-1)+\frac{2}{\lambda(k)}\cdot \left (1-\frac{3}{p}\right )
                                                                        \log(C(q)).
  \end{align*}
  \normalsize Here, the second inequality results from the estimate
  $\lambda(k)\geq \frac{k}{N}$, where the $\lambda(k)$ are the
  coefficients~\eqref{eq:def_lambda} used in the definition of the generalized
  heat semigroup. Thus we can write 
  $ e^{-2t\lambda(k)}C(q)^{4(1-\frac{3}{p})} \leq (p-1)^{-c k}$. Now, let us try
  to find $c\geq 1$ satisfying
  \[\phi(c):=\sum_{k\geq 1}(p-1)^{1-c k}(1+k)^{2-\frac{6}{p}}\leq 1.\]
    
  To do this, we will use the following estimation
  \begin{align*}
    \phi(c)&\leq \sum_{k\geq 1}(p-1)^{1-ck}(1+k)^2 \\
           &= (1-(p-1)^{-c})^{-3}(p-1)(4(p-1)^{-c}-3(p-1)^{-2c}+(p-1)^{-3c})=:\psi(c).
  \end{align*}
    
  By setting $t=\frac{1}{1-(p-1)^{-c}}$, the problem to find $c\geq 1$
  satisfying $\psi(c)\leq 1$ becomes equivalent to solve the following
  inequality
  \[2t^3-t^2-1\leq \frac{1}{p-1} \Leftrightarrow t^2(2t-1)\leq \frac{p}{p-1}.\]
    
  Now, our claim is that the above inequality holds at $t=1+\frac{a}{p-1}$ with
  $a=\frac{1}{24}$.  Indeed, since $(1+x)^3\leq 1+3x(1+x)^2$ for all $x>0$, we
  have
  \begin{align*}
    \left (1+\frac{a}{p-1}\right )^2\cdot \left (1+\frac{2 a}{p-1} \right )&\leq \left (1+\frac{2a}{p-1}\right )^3\\&
                                                                                                                      \leq 1+\frac{6a}{p-1}\cdot (1+\frac{2a}{p-1})^2 \leq 1+\frac{24a}{p-1}= \frac{p}{p-1}.
  \end{align*}
    
  Therefore, we can see that $\phi(c)\leq \psi(c)\leq 1$ at
  $c=1+\frac{\log(24-\frac{23}{p})}{\log(p-1)}$. Lastly, since
  $c_p=1+\frac{4}{\log(p-1)}\geq 1+\frac{\log(24)}{\log(p-1)}\geq c$ and $\phi$
  is decreasing, we have $\phi(c_p)\leq \phi(c) \leq 1$.
\end{proof}
    
Theorem \ref{thm1} sharpens \cite[Theorem 2.6]{FrHoLeUlZh17} in the case when
\begin{equation*}
  1+\frac{4}{\log(p-1)}\leq \frac{2\log(1+ \sqrt{3})}{\log(3)}\approx 1.8297,
\end{equation*}
i.e. when $p\geq 125.1085$ approximately. However, even for
$2\leq p\leq 125.1085$, we can obtain an improved time to contractivity:

\begin{theorem}\label{thm2}
  Let $c=\frac{9}{8}\log(2)+1\approx 1.7798$ and $p\geq 4$. Then
  \[\norm{T_t(f)}_{L_p(O_N^+)}\leq \norm{f}_{L_2(O_N^+)}\]
  for all $t\geq \frac{cN}{2}\log(p-1)+(1-\frac{3}{p})\cdot 2N\log(C(q))$.
\end{theorem}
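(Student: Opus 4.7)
The plan is to reuse the reduction leading to Theorem~\ref{thm1} and combine it with a monotonicity trick that pins down exactly why $c=1+\tfrac{9}{8}\log 2$ is the correct constant. Applying the non-commutative martingale convexity inequality of \cite{RiXu16} together with the Khintchine estimate (Theorem~\ref{thm:Khintchine}), the bound $\lambda(k)\geq k/N$, and the estimate $C(q)^{4(1-3/p)(1-k)}\leq 1$ for $k\geq 1$ and $p\geq 3$, it suffices to show that for $c=1+\tfrac{9}{8}\log 2$ one has
\[
\Phi_p(c):=(p-1)\sum_{k\geq 1}(p-1)^{-ck}(k+1)^{2-6/p}\leq 1\qquad\text{for every }p\geq 4.
\]

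The key step is a monotonicity of the summands in $p$. Set $g_k(p)=(p-1)^{1-ck}(k+1)^{2-6/p}$, so $\Phi_p(c)=\sum_{k\geq 1}g_k(p)$. A direct computation gives
\[
\partial_p\log g_k(p)=\frac{1-ck}{p-1}+\frac{6\log(k+1)}{p^2}=-\frac{Q_k(p)}{(p-1)p^2},
\]
with $Q_k(p):=(ck-1)p^2-6\log(k+1)\,p+6\log(k+1)$. For $k=1$ the specific value $c=1+\tfrac{9}{8}\log 2$ is precisely what makes $Q_1(4)=0$; indeed one factors $Q_1(p)=\tfrac{9\log 2}{8}(p-4)(p-\tfrac{4}{3})$, which is $\geq 0$ on $[4,\infty)$, so $g_1$ is non-increasing there and attains its maximum at $p=4$. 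For $k\geq 2$, the discriminant of $Q_k$ equals $6\log(k+1)\bigl[6\log(k+1)-4(ck-1)\bigr]$ and is negative as soon as $ck-1>\tfrac{3}{2}\log(k+1)$. The latter holds for all $k\geq 2$: the map $k\mapsto ck-1-\tfrac{3}{2}\log(k+1)$ is increasing on $[2,\infty)$ (since $c-\tfrac{3}{2(k+1)}\geq c-\tfrac12>0$) and takes the positive value $2c-1-\tfrac{3}{2}\log 3\approx 0.91$ at $k=2$. Hence $Q_k>0$ on $(0,\infty)$ for $k\geq 2$ and $g_k$ is strictly decreasing on $[4,\infty)$. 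Summing over $k$ yields $\Phi_p(c)\leq\Phi_4(c)$ for every $p\geq 4$.

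It then remains to bound $\Phi_4(c)$. Since $2-6/p=1/2$ at $p=4$, and since $k+1\leq 2^k$ for $k\geq 1$ gives $(k+1)^{1/2}\leq 2^{k/2}$, the series becomes geometric:
\[
\Phi_4(c)=3\sum_{k\geq 1}3^{-ck}(k+1)^{1/2}\leq 3\sum_{k\geq 1}(3^{-c}\sqrt 2)^k=\frac{3\cdot 3^{-c}\sqrt 2}{1-3^{-c}\sqrt 2}.
\]
This is $\leq 1$ precisely when $3^c\geq 4\sqrt 2$, i.e.\ $c\geq \tfrac{5\log 2}{2\log 3}\approx 1.577$, and our $c=1+\tfrac{9}{8}\log 2\approx 1.780$ comfortably satisfies this threshold, completing the argument.

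The main obstacle is the monotonicity step: the specific value $c=1+\tfrac{9}{8}\log 2$ is forced exactly by the requirement $\partial_p\log g_1(4)=0$, and one then has to check that this turns $p=4$ into a global (not merely local) maximum of $g_1$ on $[4,\infty)$ while simultaneously ensuring that $g_k$ has no interior critical point on $[4,\infty)$ for any $k\geq 2$. The factorization of $Q_1$ and the discriminant analysis for $Q_k$, $k\geq 2$, are the technical hearts of the proof; once these are in place, the other two steps are essentially the same as in Theorem~\ref{thm1}.
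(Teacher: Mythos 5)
Your proof is correct and follows essentially the same strategy as the paper: reduce to showing $\phi(p)=(p-1)\sum_{k\geq 1}(p-1)^{-ck}(k+1)^{2-6/p}\leq 1$, prove each summand is non-increasing in $p$ on $[4,\infty)$ for $c=1+\tfrac{9}{8}\log 2$, and then bound $\phi(4)$. The only departures are technical: you verify monotonicity via a discriminant analysis of the quadratic $Q_k(p)$, whereas the paper separates variables into the decreasing $f(p)=\tfrac{6(p-1)}{p^2}$ and the increasing $g(k)=\tfrac{ck-1}{\log(1+k)}$ and checks $f(4)\leq g(1)$; and you bound $\phi(4)$ by the elementary estimate $(k+1)^{1/2}\leq 2^{k/2}$ to obtain a geometric series, whereas the paper uses Cauchy--Schwarz and solves a cubic threshold inequality.
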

\begin{proof}
  In the proof of Theorem \ref{thm1}, let
  $\phi_k(p)=(p-1)^{1-ck}(1+k)^{2-\frac{6}{p}}$. Then
  \[\phi_k'(p)=(p-1)^{-ck}(1+k)^{2-\frac{6}{p}}((1-ck)+\frac{6(p-1)}{p^2}\log(1+k)).\]
  Let us suppose that $1\leq c\leq 2$ and consider functions
  $f(p)=\frac{6(p-1)}{p^2}$ and $g(k)=\frac{ck-1}{\log(1+k)}$.  Then it is easy
  to check that $g'(k)\geq 0$ for all $k\geq 1$ and $f'(p)=6p^{-3}(2-p)$.
    
  Since $f(4)= \frac{9}{8}\leq g(1)=\frac{c-1}{\log (2)}$ for all
  $c\geq \frac{9}{8}\log(2)+1\approx 1.7798$, the function $ \phi_k$ is
  decreasing on $[4,\infty)$ for each $k\geq 1$. Therefore,
  $\phi= \sum_{k\geq 1}\phi_k$ is decreasing on $[4,\infty)$ and
  \begin{align*}
    \phi(4)&=3 \sum_{k\geq 1}3^{-ck}(1+k)^{\frac{1}{2}}\\
           &\leq 3\left (\sum_{k\geq 1}3^{-ck}(1+k)\right )^{\frac{1}{2}}\left (\sum_{k\geq 1}3^{-ck}\right )^{\frac{1}{2}}=3(1-3^{-c})^{-\frac{3}{2}}3^{-c}(2-3^{-c})^{\frac{1}{2}}\leq 1
  \end{align*}
  if and only if $3^{-c}\leq X_0$, where $X_0$ is the second largest solution
  of the equation $8X^3-15X^2-3X+1\geq 0$. Hence, $\phi(p)\leq 1$ for all
  $p\geq 4$ whenever $c\geq -\log_3(X_0)\approx 1.547326$.
\end{proof}
    
In \cite{RiXu16}, their $L_p-L_2$ Khintchine inequalities were used in
conjunction with a clever choice of conditional expectation onto the subalgebra
generated by a semicircular system to find the optimal time $t_{N,p}$ for heat
semigroups of free groups.  However, it is not clear what would be the right
choice of subalgebra to play the same game for the free orthogonal quantum
groups $O_N^+$. Nevertheless, our Khintchine inequalities (Theorem
\ref{thm:Khintchine}) enable us to get an almost optimal time to contraction
under the additional assumption that $h(f u_{i,j})=0$ for all $1\leq i,j\leq N$:

    
\begin{theorem}\label{thm3}
  Let $N\geq 3$ and $p\geq 4$. Then the following inequality
  \[\norm{T_{t}(f)}_{L_p(O_N^+)}\leq \norm{f}_{L_2(O_N^+)}\]
  holds
  \begin{enumerate}
  \item if $f \in L_2(O_N^+)$ satisfies $h(fu_{i,j})=0$ for all
    $1\leq i,j\leq N$ and
  \item if $t\geq \frac{N}{2}\log(p-1)+(1-\frac{3}{p})\cdot 2N\log(C(q))$.
  \end{enumerate}
\end{theorem}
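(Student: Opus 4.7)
The plan is to follow the scheme of Theorems~\ref{thm1} and~\ref{thm2}, exploiting the vanishing hypothesis to remove the length-one contribution from the Fourier decomposition of $f$ and thereby sharpen the constant in front of $\log(p-1)$ from a value strictly greater than $1$ down to exactly $1$.

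First I would translate the hypothesis $h(fu_{i,j})=0$ into a spectral statement on the Peter--Weyl expansion $f=\sum_{k\ge 0}f_k$. Fusion rules for $O_N^+$ force $h(f_k u_{i,j})=0$ whenever $k\ne 1$, so the assumption is equivalent to $h(f_1 u_{i,j})=0$ for all $i,j$. Since $O_N^+$ is of Kac type ($F=I_N$) we have $u_{i,j}^*=u_{i,j}$, and the orthogonality relations~\eqref{eq:ortho_relations} give $h(u_{k,l}u_{i,j})=N^{-1}\delta_{ik}\delta_{jl}$; writing $f_1=\sum_{k,l}a_{k,l}u_{k,l}$ and applying this identity forces $a_{i,j}=0$ for all $i,j$, i.e., $f_1=0$.

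Next, applying the Ricard--Xu noncommutative martingale convexity inequality \cite[Theorem~1]{RiXu16}, followed by the triangle inequality, the Khintchine estimate of Theorem~\ref{thm:Khintchine}(1), and Cauchy--Schwarz in $k$ (exactly as in the proofs of Theorems~\ref{thm1} and~\ref{thm2}), one obtains
\begin{equation*}
\|T_t(f)\|_p^2 \le h(f)^2 + (p-1)\,C(q)^{4(1-3/p)}\Big(\sum_{k\ge 2}e^{-2t\lambda(k)}(k+1)^{2(1-3/p)}\Big)\big(\|f\|_2^2-h(f)^2\big),
\end{equation*}
where the summation starts at $k=2$ by virtue of $f_1=0$. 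Using $\lambda(k)\ge k/N$ together with the hypothesis on $t$ gives $e^{-2t\lambda(k)}\le A^{-k}$ with $A:=(p-1)\,C(q)^{4(1-3/p)}\ge 3$, and the desired contractivity $\|T_t(f)\|_p\le\|f\|_2$ reduces to
\begin{equation*}
S_p:=\sum_{k\ge 2}A^{1-k}(k+1)^{2(1-3/p)}\le 1 \qquad(p\ge 4).
\end{equation*}

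The main obstacle is this last numerical inequality, which is genuinely tight. Since each term of $S_p$ is monotone decreasing in $A$, and $A\ge p-1$ with $C(q)\ge 1$, the extremal case is $p=4$ with $C(q)\to 1^+$, giving $A=3$, exponent $1/2$, and $S_4=\sum_{k\ge 2}3^{1-k}\sqrt{k+1}\approx 0.928$. I would finish by peeling off the first two or three terms of $S_p$ explicitly and bounding the tail via $\sqrt{k+1}\le(k+1)/2$ for $k\ge 3$ in the delicate case $p=4$, or via the cruder estimate $(k+1)^{2(1-3/p)}\le(k+1)^2$ for $p>4$, in each instance reducing the tail to a truncated derivative of a geometric series in $A^{-1}$ that sums in closed form and fits comfortably within the remaining margin below $1$.
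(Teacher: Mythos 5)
Your reduction to $f_1 = 0$ is correct and nicely spelled out, and the chain Ricard--Xu $\to$ Khintchine (Theorem~\ref{thm:Khintchine}) $\to$ Cauchy--Schwarz follows the paper's template faithfully, arriving at essentially the same target: the sum $\sum_{k\ge 2}(p-1)^{1-k}(1+k)^{2-6/p}\le 1$ (your $S_p$ is dominated by this, since $A\ge p-1$). The gap lies in the numerical verification at the end.

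First, the claim that ``the extremal case is $p=4$ with $C(q)\to 1^+$'' is not justified term-by-term. The first term $\phi_2(p)=(p-1)^{-1}3^{2-6/p}$ is \emph{increasing} at $p=4$: its supremum over $p\ge 4$ is $\approx 0.603$, attained near $p\approx 5.4$, strictly larger than $\phi_2(4)=3^{-1/2}\approx 0.577$. It happens numerically that $\phi(p)\le\phi(4)$ for all $p\ge 4$, but that fact requires an argument; the paper sidesteps the issue by replacing $\phi_2(p)$ with $\sup_{p\ge 4}\phi_2(p)$ and only invoking monotonicity for $k\ge 3$, where $\phi_k$ genuinely decreases on $[4,\infty)$. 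Your argument never establishes why you may restrict attention to $p=4$.

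Second, the proposed crude bound $(k+1)^{2-6/p}\le (k+1)^2$ for $p>4$ is far too lossy in the range of $p$ where the margin is tightest. At $p=4$ the exponent is $1/2$; even at $p=8$ it is only $5/4$. Replacing it with $2$ inflates the terms enormously: for instance at $p=5$, peeling off $k=2,3,4$ and bounding the tail by $4\sum_{k\ge 5}4^{-k}(k+1)^2$ already pushes the total above $1$, and for $p$ in roughly $(4,12)$ the geometric-series closed form will not ``fit comfortably'' below $1$ with only two or three terms peeled off. To close the argument you would need either to keep the $p$-dependent exponent alive in the tail (as the paper does, via the monotonicity of $\phi_k$ in $p$ for $k\ge 3$, which reduces everything to the single value $p=4$), or to peel off substantially more terms, uniformly in $p$. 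As written, the proof has a real gap between the reduction and the numerical conclusion.
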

    
\begin{proof}
  By repeating the proof of Theorem \ref{thm1}, since $c=1$ and $f_1=0$, the calculation can be distilled to show
\[\phi(p)=\sum_{k\geq 2} (p-1)^{1-k}(1+k)^{2-\frac{6}{p}}\leq 1\text{ for all }p\geq 4.\]

It is easy to check that $\phi_k(p)=(p-1)^{1-k}(1+k)^{2-\frac{6}{p}}$ is a decreasing function for any $k\geq 3$ and $\displaystyle \sup_{p\geq 4}\phi_2(p)=\sup_{p\geq 4}\left \{ (p-1)^{-1}3^{2-\frac{6}{p}}\right \}\approx 0.60348$. Thus, 
\begin{align*}
\phi(p)&\leq \sup_{p\geq 4}\phi_2(p)+\sum_{k\geq 3}3^{1-k}\sqrt{1+k}\\
&=\sup_{p\geq 4}\phi_2(p)+ 9 \sum_{k\geq 4}3^{-k}\sqrt{k}\\
&\leq \sup_{p\geq 4}\phi_2(p)+ 9 \left (\sum_{k=4}^6 3^{-k}(\sqrt{k}-k)+\sum_{k\geq 4}k3^{-k}\right )\\
&\approx 0.60348 + 0.38158<1.
\end{align*}

\end{proof}

Based on the above results, we are led to make the following conjecture on the asymptotic behavior of the optimal time-to-contraction for the heat semigroups:

\begin{conjecture}
  The optimal time to $L_p$-hypercontractivity for $O_N^+$ should be of the form
  \begin{equation}
    t_{N,p} = \frac{N}{2} \log(p-1)+\epsilon_N \quad\text{with}\quad 
    \lim_{N\rightarrow \infty}\epsilon_N=0.
  \end{equation}
\end{conjecture}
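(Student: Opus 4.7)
The plan is to match the lower bound from Lemma~\ref{lem-necessary} by upgrading the strategy of Theorem~\ref{thm3}: start from the Ricard--Xu noncommutative martingale inequality
\begin{equation*}
\|T_t(f)\|_{L_p(O_N^+)}^2 \leq |h(f)|^2 + (p-1)\Big\|\sum_{k \ge 1} e^{-t\lambda(k)} f_k\Big\|_{L_p(O_N^+)}^2,
\end{equation*}
then separate the length-one block $f_1$ from the tail $\sum_{k \ge 2} f_k$ and estimate each with a sharper bound than the uniform Khintchine constant of Theorem~\ref{thm:Khintchine}. The length-one block is precisely what forces the constant $c > 1$ in Theorems~\ref{thm1} and~\ref{thm2}, so the success of the approach hinges on improving the length-one Khintchine inequality as $N \to \infty$.

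For the tail $k \ge 2$ the computation of Theorem~\ref{thm3} is essentially tight: choosing $t = \tfrac{N}{2}\log(p-1) + \delta_N$ bounds the tail contribution in the Ricard--Xu estimate by $C(q)^{4(1-3/p)}\sum_{k \ge 2}(p-1)^{1-k}(k+1)^{2-6/p}$, and this sum is strictly less than $1$ for $p \ge 4$ (as in the proof of Theorem~\ref{thm3}). In the Kac case the parameter $q \in (0,1)$ is determined by $q + q^{-1} = N$, so $q = O(1/N)$ and hence $C(q) - 1 = O(q^2) = O(1/N^2)$. Thus the correction $(1 - 3/p) \cdot 2N \log C(q)$ tends to $0$ as $N \to \infty$ and is absorbed into $\epsilon_N$.

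The main obstacle is to prove a length-one Khintchine inequality with asymptotically sharp constant, namely
\begin{equation*}
\|f_1\|_{L_p(O_N^+)} \le (1 + o_N(1))\sqrt{p-1}\, \|f_1\|_{L_2(O_N^+)}
\end{equation*}
for every $f_1 = \sum_{i,j} a_{ij} u_{ij}$, in place of the bound $K_{1,p} \le (2C(q)^2)^{1-3/p}$ supplied by Theorem~\ref{thm:Khintchine}. With such an inequality, the length-one contribution $e^{-t/N}\|f_1\|_p$ at $t = \tfrac{N}{2}\log(p-1) + \epsilon_N$ would be absorbed into $\|f_1\|_2$ up to a factor $1 + o_N(1)$, and together with the tail estimate this yields the conjecture. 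The natural heuristic is that the rescaled generators $\sqrt{N}\, u_{ij}$ form a family that becomes asymptotically free semicircular as $N \to \infty$, extending Banica's theorem that $\chi_1$ is semicircular; the target constant $\sqrt{p-1}$ is then the Biane hypercontractivity constant for free semicircular systems. The technical difficulty is to establish this sharp inequality nonasymptotically, with explicit control of the error. The two most natural routes are: (i) Weingarten-type moment computations on $O_N^+$ giving integral formulas for $\|f_1\|_p^p$ amenable to asymptotic analysis, or (ii) constructing an asymptotically isometric embedding (or a suitable conditional expectation) from the length-one subspace of $L_p(O_N^+)$ into a free semicircular algebra, in the spirit of the free-group argument of \cite{RiXu16}.
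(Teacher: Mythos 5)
First, note that the statement you are attempting to prove is labeled a \emph{conjecture} in the paper and is explicitly left open: the authors supply motivating evidence (the lower bound of Lemma~\ref{lem-necessary}, the $c\to 1$ behavior of Theorem~\ref{thm1}, and the $c=1$ result of Theorem~\ref{thm3} under the assumption $f_1=0$), and they even point out why the Ricard--Xu strategy does not close the gap: ``it is not clear what would be the right choice of subalgebra to play the same game for the free orthogonal quantum groups $O_N^+$.'' So there is no proof in the paper for you to match; what you should have is a new argument, and the one you sketch does not work.

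The gap is in the bookkeeping around the Ricard--Xu martingale convexity inequality. Writing $g=\sum_{k\ge 1}e^{-t\lambda(k)}f_k$, the inequality gives
\begin{equation*}
\|T_t f\|_p^2 \le |h(f)|^2 + (p-1)\|g\|_p^2,
\end{equation*}
and after triangle plus Cauchy--Schwarz one needs $(p-1)\sum_{k\ge 1}e^{-2t\lambda(k)}K_{k,p}^2\le 1$. At $t=\frac{N}{2}\log(p-1)+\epsilon_N$, using $\lambda(k)\ge k/N$, the $k=1$ summand is already $e^{-2\epsilon_N/N}K_{1,p}^2$, which tends to $K_{1,p}^2\ge 1$ as $N\to\infty$. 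Thus the sum can only be $\le 1$ if $K_{1,p}\to 1$ \emph{and} the $k\ge 2$ tail tends to $0$ --- but Theorem~\ref{thm3} shows the tail is close to $1$ (about $0.98$), and $K_{1,p}>1$ for every finite $N$ (e.g.\ $\|\chi_1\|_4/\|\chi_1\|_2=2^{1/4}>1$). Your claim that a length-one Khintchine bound of the form $\|f_1\|_p\le(1+o_N(1))\sqrt{p-1}\|f_1\|_2$ would suffice is a factor of $\sqrt{p-1}$ off: after multiplying by the prefactor $(p-1)$ from Ricard--Xu and the attenuation $e^{-2t/N}=(p-1)^{-1}e^{-2\epsilon_N/N}$, that bound contributes $(1+o_N(1))^2(p-1)\|f_1\|_2^2$ to the right-hand side, which is far too large. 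In fact the relation $e^{-t/N}\|f_1\|_p\le(1+o)\|f_1\|_2$ is the bare degree-one hypercontractivity statement, and it cannot be fed \emph{again} into the $(p-1)$-weighted Ricard--Xu estimate without double counting.

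What this means is that the martingale-plus-Khintchine scheme is structurally incapable of reaching $c=1$ for general $f$, no matter how sharp the length-one Khintchine constant is made; this is precisely the obstruction the paper identifies. To prove the conjecture one needs the other idea you mention only in passing at the very end --- a conditional expectation onto a suitable subalgebra in the spirit of Ricard--Xu's treatment of $\mathbb{F}_N$ (or some genuinely different method, e.g.\ a quantitative asymptotic-freeness argument giving $L_p$-isomorphic embeddings of the low-degree blocks into a free semicircular algebra). That step is the entire content of the problem and is not addressed. Two smaller points: (i) your asymptotic $\log C(q)=O(1/N^2)$, hence $2N\log C(q)=O(1/N)$, is correct and harmless; (ii) the heuristic limit constant for degree-one Khintchine is not $\sqrt{p-1}$ --- for a free semicircular family $\sum a_i s_i$ is again semicircular with variance $\sum|a_i|^2$, so the relevant ratio is $\|s\|_p/\|s\|_2$, which is strictly smaller than $\sqrt{p-1}$ for $p>2$ (e.g.\ $2^{1/4}$ versus $\sqrt 3$ at $p=4$) --- and in $O_N^+$ the length-one block consists of non-self-adjoint matrix coefficients, so even that identification requires justification.
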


\begin{remark}
  The conjecture above is motivated by the following observations:
  \begin{enumerate}
  \item We have $t_{N,p}\geq \frac{N}{2}\log(p-1)$ (Lemma~\ref{lem-necessary}).
  \item There exists $c\approx 1.83297$ such that
    $t_{N,p}\leq \frac{cN}{2}\log(p-1)+\epsilon_N$ for all $p\geq 4$, with
    $\epsilon_N \to 0$ \cite[Theorem 2.6]{FrHoLeUlZh17}.

  \item The above $c$ can be sharpened to $\frac{9}{8}\log(2)+1\approx 1.7798$
    for all $p\geq 4$ (Theorem \ref{thm2}) .
  \item Let $c_p$ be the best constant $c$ for fixed $p\geq 4$. Then
    $ \lim_{p\rightarrow \infty}c_p=1$ (Theorem \ref{thm1}).
  \item The constant $c$ can be chosen to be $1$ under the additional condition
    that $h(fu_{i,j})=0$ for all $1\leq i,j\leq N$ (Theorem \ref{thm3}) .
  \end{enumerate}

  In the case of duals of discrete groups we have $t_{p} = \frac{1}{2}\log(p-1)$
  for the Poisson semigroup on $\tor^N$ (Weissler and Bonami's induction trick),
  on the dual of $\z_2^{*N}$ \cite{JuPaPaPeRi15} and on the dual of
  $\mathbb{F}_N=\z^{*N}$ \cite{JuPaPaPe17,RiXu16}.
\end{remark}

\bibliographystyle{alpha} \bibliography{BVY}
    
\end{document}